\definecolor{rp}{rgb}{0.25, 0, 0.75}
\definecolor{dg}{rgb}{0, 0.5, 0}
\newcommand{\der}{\delta}
\newcommand{\hf}{\hat F}
\newcommand{\hu}{\hat u}
\newcommand{\hpsi}{\hat\psi}
\newcommand{\hal}{\hat{\alpha}}
\newcommand{\hro}{\hat{\rho}}
\newcommand{\tf}{\widetilde F}
\newcommand{\tu}{\widetilde u}
\newcommand{\id}{\mbox{Id}}
\newcommand{\ist}{\int_{s}^{t}}
\newcommand{\norm}[1]{\lVert #1\rVert}
\newcommand{\ott}{[0,T]}
\newcommand{\2}{\mathbf{2}}
\newcommand{\xd}{{\bf x^{2}}}
\newcommand{\Id}{\mathfrak{t}}
\newcommand{\cb}{{\mathcal B}}
\newcommand{\cac}{{\mathcal C}}
\newcommand{\ci}{{\mathcal I}}
\newcommand{\cj}{{\mathcal J}}
\newcommand{\cn}{{\mathcal N}}
\newcommand{\cq}{{\mathcal Q}}
\newcommand{\crr}{{\mathcal R}}
\newcommand{\cs}{{\mathcal S}}
\newcommand{\ct}{{\mathcal T}}
\newcommand{\cz}{{\mathcal Z}}
\newcommand{\al}{\alpha}
\newcommand{\bet}{\beta}
\newcommand{\ep}{\varepsilon}
\newcommand{\ga}{\gamma}
\newcommand{\ka}{\kappa}
\newcommand{\la}{\lambda}
\newcommand{\laa}{\Lambda}
\newcommand{\si}{\sigma}
\newcommand{\te}{\theta}
\newcommand{\vp}{\varphi}
\newcommand{\N}{{\mathbb N}}
\newcommand{\R}{{\mathbb R}}
\newcommand{\X}{{\mathbb X}}
\newcommand{\bx}{\mathbf{x}}
\newcommand{\lla}{\left\langle}
\newcommand{\rra}{\right\rangle}
\newcommand{\lcl}{\left\{}
\newcommand{\rcl}{\right\}}
\newcommand{\lp}{\left(}
\newcommand{\rp}{\right)}
\newcommand{\lc}{\left[}
\newcommand{\rc}{\right]}
\newcommand{\lln}{\left|}
\newcommand{\rrn}{\right|}
\newtheorem{thm}{Theorem}[section]
\newtheorem{cor}[thm]{Corollary}
\newtheorem{lem}[thm]{Lemma}
\newtheorem{prop}[thm]{Proposition}
\newtheorem{notation}[thm]{Notation}
\newtheorem{defn}[thm]{Definition}
\newtheorem{hyp}[thm]{Hypothesis}
\theoremstyle{remark}
\newtheorem{rem}[thm]{Remark}
\begin{document}

\title[Controlled fully nonlinear SPDEs]{Controlled viscosity solutions\\ of fully nonlinear rough PDEs}

\author{M. Gubinelli \and S. Tindel \and I. Torrecilla}

\address{Massimiliano Gubinelli,
CEREMADE,
Universit\'e de Paris-Dauphine,
75116 Paris, France.}
\email{massimiliano.gubinelli@ceremade.dauphine.fr}

\address{Samy Tindel, Institut {\'E}lie Cartan Lorraine,
Universit\'e de Lorraine, B.P. 239,
54506 Vand{\oe}u\-vre-l{\`e}s-Nancy, France.}
\email{samy.tindel@univ-lorraine.fr}

\address{Iv\'an Torrecilla, Institut {\'E}lie Cartan Lorraine,
Universit\'e de Lorraine,  B.P. 239,
54506 Vand{\oe}u\-vre-l{\`e}s-Nancy, and Universit\'e de Paris-Dauphine,
75116 Paris, France.}
\email{itorrecillatarantino@gmail.com}

\thanks{S. Tindel is member of the BIGS
(Biology, Genetics and Statistics) team at INRIA. I. Torrecilla wishes to acknowledge the support from both BIGS
(Biology, Genetics and Statistics) team at INRIA and the project entitled "Explorations des Chemins
RUgeux (ECRU)/ Explorations on rough paths" ANR Projet Blanc
2009/2012 by means of a post-doctoral position for six months in each one.}

\subjclass[2010]{60G22;  34K50}

\begin{abstract}
We propose a definition of viscosity solutions to fully nonlinear PDEs driven by a rough path via  appropriate notions of test functions and rough jets. These objects will be defined as controlled processes with respect to the driving rough path. We show that this notion is compatible with the seminal results of Lions and Souganidis~\cite{LS98,LSo1} and with the recent results of Friz and coauthors~\cite{CFO} on fully non-linear SPDEs with rough drivers.
\end{abstract}

\keywords{Rough paths theory; Stochastic viscosity solutions; Stochastic PDEs.}


\date{\today}

\maketitle


\section{Introduction}
This note focuses on fully nonlinear rough partial differential equations with general form
\begin{align}
\label{main:spde1} \left\{
\begin{array}{ll}
d u =F\lp t,\te,u,Du,D^{2}u \rp dt
+\sum_{l=1}^{d} \si^{l}\lp t,\te, u, Du\rp dx_t^{l},&\hspace{-0.1cm}(t, \te)\in[0,T]\times \R^n    \\
u(0,\te)=\alpha(\te), & \te\in \R^n,
\end{array}
\right.
\end{align}
where $T>0$ is a fixed time horizon,  $F$ is a function defined on $[0,T]\times \R^n\times\R\times\R^n$ satisfying some suitable smoothness and monotonicity conditions, $\si$ is a smooth and bounded coefficient,  and $\alpha$ is a regular enough initial condition.

\smallskip

When the noisy driver $x$ is H\"older continuous, recent advances have allowed to solve equation \eqref{main:spde1} in several special cases of interest:
\begin{itemize}
\item
The case of coefficients $F(Du,D^{2}u )$ and $\si(Du)$ is handled in \cite{LS98}, when the driver $x$ is a $d$-dimensional Brownian motion.
\item
The article \cite{LSo1} focuses on a dependence of the form $F(Du,D^{2}u )$ and $\si(u)$, still in the case of a $d$-dimensional Brownian motion $x$.
\item
The situation where $x$ is a general H\"older continuous function generating a rough path, with a function $F$ depending on $t,\te,Du,D^{2}u$ and a family of functions $\si(x,Du)$ depending linearly on $Du$, is treated in \cite{CFO} and further analysed and applied in~\cite{DF,FO,CDFO,FO2}.
\end{itemize}
Let us also point out that the global strategy in all those papers  can be briefly outlined in the following way (though it might be somehow implicit in the series of papers \cite{LS98,LSo1}):

\smallskip

\noindent
\textit{(i)} Start from a regularization $x^{\ep}$ of $x$, for which equation \eqref{main:spde1} is interpreted in the deterministic viscosity sense. Let us call $u^{\ep}$ its solution.

\smallskip

\noindent
\textit{(ii)}
Perform some natural change of variables allowing to transform equation \eqref{main:spde1} driven by $x^{\ep}$ into a deterministic type equation whose coefficients depend on $x^{\ep}$, or possibly on a rough path above $x^{\ep}$. Call $v^{\ep}$ the solution to the transformed equation, which enjoys two crucial properties: $v^{\ep}=G_{1}(u^{\ep})$ on the one hand, and  $v^{\ep}=G_{2}(\X^{\ep})$ on the other hand, where $\X^{\ep}$ stands for a geometric rough path above $x^{\ep}$. We are obviously very loose about the definition of $G_{1},G_{2}$, but let us mention that $G_{1}$ should be invertible, and that $G_{2}$ should be continuous with respect to the rough path topology.

\smallskip

\noindent
\textit{(iii)}
With those elements in hand, we now assume that $\X^{\ep}$ converges to a rough path above $x$ as $\ep\to 0$, and the solution to equation \eqref{main:spde1} is defined as $u=G_{1}^{-1}(G_{2}(\X))$.

\smallskip

\noindent
By its very construction, this way of defining a solution is thus an extension of the viscosity solution for equation \eqref{main:spde1} with a smooth driver.

\smallskip

We propose to step back and wonder what is really meant by a solution to equation \eqref{main:spde1} when $x$ is a rough signal. More specifically,  think of~(\ref{main:spde1}) as an integral type equation of the form:
\begin{equation}
\label{main:spde2} u(t,\te)=\alpha(\te)+\int_0^t F\lp r,\te,(u,Du)(r,\te)\rp dr+\sum_{l=1}^d\int_0^t\si^l\lp r,\te,(u, Du)(r,\te)\rp dx_r^l,
\end{equation}
where the integral with respect to $x$ will be understood in the rough path sense. We wish to address the following questions:
\begin{itemize}
\item
Give a general notion of viscosity solution, based on a suitable class of test functions, which will be called $\ct_{\si}$.
\item
Show that this notion of viscosity solution is a generalization of the notion of strong solution.
\item
Also define some related jets, called $\mathfrak{P}_{\si}^{+}$ and  $\mathfrak{P}_{\si}^{-}$, and study their compatibility with the set of test functions.
\item
In some special cases, observe how equation \eqref{main:spde2} can be transformed into a PDE with noisy coefficients thanks to rough paths methods, and where our transforms also involve the set $\ct_{\si}$ and $\mathfrak{P}_{\si}^{\pm}$.
\end{itemize}
We shall thus observe how noisy fully nonlinear equations can be solved within a framework which mimics the deterministic setting, by looking at the effect of the functions $G_{1},G_{2}$ alluded to above on test functions and jets. This gives an alternative way to solve equations like \eqref{main:spde2}, but it should be noticed that our aim here is not to come up with new results in this direction (namely, our examples of application will be those of \cite{CFO,LSo1}). However, we believe that our article fills a gap in the theory, insofar as it describes a natural notion of solution for viscosity solutions to rough PDEs.

\smallskip

Let us give some hints about the way to define our class of test functions $\ct_{\si}$. A natural guess in view of equation \eqref{main:spde2} is to assume that those test functions $\psi\in\ct_{\si}$ satisfy the relation:
\begin{equation*}
\psi_{t}(\te) = \al(\te) + \int_{0}^{t} \si^l(r,\te,\psi_r,D\psi_r(\te)) \, dx_{r}^{l} +
\int_{0}^{t} \psi^{t}_{r}(\te) \, dr,
\end{equation*}
for a given continuous function $\psi^{t}$. However, this natural guess raises the problem of a proper definition of the rough integral appearing in the r.h.s.. A well-suited framework to settle this issue is that of \emph{(strongly) controlled paths} as introduced in~\cite{Gu}.  Controlled paths are functions whose increments ``looks like'' those of $x$ (or more generally that can be expanded along a basis of well behaved even if irregular objects). In our particular case the expansion for $\psi$ is specified in a way allowing to take also the drift $\psi^{t}$ into account. With this structure in mind, our main task will be the following: (i) Obtain some space-time Taylor type expansions for processes like $\psi$, which allows to relate the set of test functions $\ct_{\si}$ and the jets $\mathfrak{P}_{\si}^{\pm}$. (ii) Describe the composition of $\psi$ with a noisy flow driven by $x$, in order to define transformations of $\ct_{\si}$ into a space of test functions corresponding to a deterministic viscosity equation. These steps will be detailed in the remainder of the paper, but let us mention at this point that our analysis is restricted here to a noise $x$ which can be lifted to a rough path with H\"older regularity greater than $1/3$. This is made for sake of clarity, but rougher situations could be handled at the cost of higher order expansions.

Let us point out also that the approach we follow in defining suitable spaces of test functions or jets is quite natural. In the series of papers~\cite{BBM,BFM,BM} Buckdahn, Ma and coauthors develop a similar theory using stochastic integration and stochastic space-time Taylor series. Even comparing with these works the controlled approach has the great advantage of not requiring any condition of adaptedness or suitable stopping time to identify the extremal points needed in the viscosity formulation. This simplifies a lot the statement of the results and renders them quite similar to the deterministic theory while at the same time going well beyond the semi-martingale setting.

\smallskip

Here is a sketch of our paper: Section \ref{sec:algebraic-intg} is devoted to recall some basic notions of algebraic integration and rough flows. Section \ref{sec:viscosity-solutions} deals with general definitions for viscosity solutions to rough PDEs. We then focus on two particular cases: transport equations at Section~\ref{sec:transport} and semilinear stochastic dependence at Section~\ref{sec:semilinear}.

\smallskip

\noindent
\textbf{Notations:} throughout the paper we use the summation convention over repeated indices. For jets, the notation $\mathfrak{P}^{\pm}$ stands for the fact that either  $\mathfrak{P}^{+}$ or $\mathfrak{P}^{-}$ can be considered in the statement.

\section{Algebraic integration and rough paths equations}\label{sec:algebraic-intg}

\subsection{Increments}\label{incr}

In this section we present the basic  algebraic structures which
will allow us to define pathwise integrals with respect to
 functions of unbounded variation, the reader being referred to \cite{Gu,GT} for further details.

\smallskip

For  real numbers
$0 \leq a \leq b \leq T < \infty $, a vector space $V$ and an integer $k\ge 1$ we denote by
$\cac_k([a,b]; V)$ the set of functions $g : [a,b]^{k} \to V$ such
that $g_{t_1 \cdots t_{k}} = 0$
whenever $t_i = t_{i+1}$ for some $1 \leq i\le k-1$.
Such a function will be called a
\emph{$(k-1)$-increment}, and we will
set $\cac_*([a,b];V)=\cup_{k\ge 1}\cac_k([a,b];V)$. An important  operator for our purposes
is given by
\begin{equation}
  \label{eq:coboundary}
\der : \cac_k([a,b];V) \to \cac_{k+1}([a,b];V), \qquad
(\der g)_{t_1 \cdots t_{k+1}} = \sum_{i=1}^{k+1} (-1)^{k-i}
g_{t_1  \cdots \hat t_i \cdots t_{k+1}} ,
\end{equation}
where $\hat t_i$ means that this argument is omitted.
A fundamental property of $\der$
is that
$\der \der = 0$, where $\der \der$ is considered as an operator
from $\cac_k([a,b];V)$ to $\cac_{k+2}([a,b];V)$.
 We will denote $\cz\cac_k([a,b];V) = \cac_k( [a,b];V) \cap \text{Ker}\der$
and $ \cb\cac_k([a,b];V) =
\cac_k([a,b];V) \cap \text{Im}\der$.

\smallskip

Some simple examples of actions of $\der$
 are as follows: For
$g\in\cac_1([a,b];V)$, $h\in\cac_2([a,b];V)$ and $f\in\cac_3([a,b];V)$ we have
\begin{equation*}
  (\der g)_{st} = g_t - g_s,
\quad
(\der h)_{sut} = h_{st}-h_{su}-h_{ut}\quad\mbox{and}\quad
(\der f)_{suvt}=f_{uvt}-f_{svt}+f_{sut}-f_{suv}
\end{equation*}
for any $s,u,v,t\in [a,b]$.
Furthermore, it is easily checked that
$\cz \cac_{k+1}([a,b];V) =  \cb\cac_{k}([a,b];$ $V)$ for any $k\ge 1$.
In particular, the following  property holds:
\begin{lem}\label{exd}
Let $k\ge 1$ and $h\in \cz\cac_{k+1}([a,b];V)$. Then there exists a (non unique)
$f\in\cac_{k}([a,b];V)$ such that $h=\der f$.
\end{lem}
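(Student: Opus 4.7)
The plan is to prove acyclicity of the complex $(\cac_*([a,b];V),\delta)$ in positive degree by writing down an explicit contracting homotopy. Since $\delta\delta = 0$, we already know $\cz\cac_{k+1} \supseteq \cb\cac_k$; only the reverse inclusion needs work. The idea, as in the standard Poincaré lemma for simplicial cochains, is that the simplex $[a,b]^k$ deformation retracts to any fixed base point, and this gives a concrete primitive by ``filling in'' one argument with that base point.

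First I would fix the base point $a\in [a,b]$ and define, for $h\in\cz\cac_{k+1}([a,b];V)$,
\begin{equation*}
f_{t_1\cdots t_k} \;:=\; (-1)^{k-1}\, h_{a\,t_1\cdots t_k}, \qquad t_1,\ldots,t_k\in [a,b].
\end{equation*}
The check $f\in\cac_k$ is immediate: if $t_i=t_{i+1}$ for some $1\le i\le k-1$, then in the $(k+1)$-tuple $(a,t_1,\ldots,t_k)$ two consecutive entries at positions $i+1$ and $i+2$ coincide, so $h$ vanishes there by hypothesis. Hence $f$ lies in $\cac_k([a,b];V)$ as required.

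Next I would verify $\delta f = h$ by a direct sign computation. Applying the defining formula \eqref{eq:coboundary} to $f$ gives
\begin{equation*}
(\delta f)_{t_1\cdots t_{k+1}} \;=\; \sum_{i=1}^{k+1}(-1)^{k-i}\,(-1)^{k-1}\,h_{a\,t_1\cdots \hat t_i\cdots t_{k+1}} \;=\; -\sum_{i=1}^{k+1}(-1)^{i}\,h_{a\,t_1\cdots \hat t_i\cdots t_{k+1}}.
\end{equation*}
On the other hand, evaluating $\delta h = 0$ on the $(k+2)$-tuple $(a,t_1,\ldots,t_{k+1})$ and isolating the term in which $a$ is the omitted argument yields
\begin{equation*}
0 \;=\; (-1)^{k}\,h_{t_1\cdots t_{k+1}} \;+\; \sum_{i=1}^{k+1}(-1)^{k-i}\,h_{a\,t_1\cdots \hat t_i\cdots t_{k+1}},
\end{equation*}
which, after multiplying by $(-1)^{-k}$ and rearranging, coincides exactly with the expression for $(\delta f)_{t_1\cdots t_{k+1}}$ above. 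Hence $h=\delta f$.

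The only real obstacle is keeping track of signs, which is purely combinatorial. The non-uniqueness mentioned in the statement is also transparent from this construction: one may replace $a$ by any other base point in $[a,b]$, or add to $f$ any element of $\delta(\cac_{k-1}([a,b];V))$, and still obtain a primitive of $h$. This flexibility will not play a role in the sequel, where $f$ is typically pinned down by an additional regularity constraint rather than by uniqueness at the algebraic level.
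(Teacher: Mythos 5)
Your proof is correct, and it is exactly the standard contracting-homotopy argument (fixing the base point $a$ in the first slot) that the paper leaves implicit when it asserts $\cz\cac_{k+1}=\cb\cac_k$ and refers to \cite{Gu}; your sign bookkeeping checks out against the convention \eqref{eq:coboundary}, as one can confirm on the cases $k=1,2$ displayed in the text. Nothing to add.
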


Observe that Lemma \ref{exd} implies in particular that all  elements
$h \in\cac_2([a,b];V)$  with $\der h= 0$ can be written as $h = \der f$
for some  $f \in \cac_1([a,b];V)$. Thus we have a heuristic
interpretation of $\der |_{\cac_2([a,b];V)}$:  it measures how much a
given 1-increment  differs from being an  exact increment of a
function, i.e., a finite difference.

\smallskip

Our further discussion will mainly rely on $k$-increments with $k \le 2$. We show now how to measure the size of those increments thanks to H\"older norms, and we shall thus specify a little the kind of space $V$ considered for the resolution of equation \eqref{main:spde1}:
\begin{notation}
For our future consideration, the space $V$ will be either $\R^n$, either a space of functions defined on $\R^n$ with a certain type of regularity like bounded uniformly continuous (BUC) functions, Sobolev spaces $W^{k,p}(\R^n)$ or $C^{2}$ functions. In case of function valued increments, the dependence in the space variable will often be omitted, and we will write $\vp_t$ instead of $\vp_t(\xi)$ for the value of the increment $\vp_t$ at a point $\xi\in\R^n$. The norms on our state spaces are all denoted  by $|\cdot|$.
\end{notation}

Let us now start the introduction of our H\"older type norms with 1-increments: for $f \in \cac_2([a,b];V)$ we set
\begin{align*} 
\|f\|_{\mu} =
\sup_{s,t\in [a,b]}\frac{|f_{st}|}{|t-s|^\mu},
\quad\text{and}\quad
\cac_2^\mu([a,b];V)=\lcl f \in \cac_2([a,b];V);\, \|f\|_{\mu}<\infty  \rcl.
\end{align*}
Observe now that  the usual H\"older spaces $\cac_1^\mu([a,b];V)$  are determined
        in the following way: for a continuous function $g\in\cac_1([a,b];V)$  set
\begin{equation*} 
\|g\|_{\mu}=\|\der g\|_{\mu},
\quad\text{and}\quad
\cac_1^\mu([a,b];V)=\lcl f \in \cac_1([a,b];V);\, \|f\|_{\mu}<\infty  \rcl.
\end{equation*}
Note that $\|\cdot\|_{\mu}$ is only a semi-norm on $\cac_1([a,b];V)$,
but we will  work  in general on spaces of the type
\begin{equation*}
\cac_{1,\al}^\mu([a,b];V)=
\lcl g:[a,b] \to V;\, g_a=\alpha,\, \|g\|_{\mu}<\infty \rcl,
\end{equation*}
for a given $\alpha \in V,$ on which $\|g\|_{\mu}$  is a norm.

 For $h \in \cac_3([a,b];V)$ we define in the same way
\begin{eqnarray}\label{eq:notation-norm-C3}
  \norm{h}_{\gamma,\rho} &=& \sup_{s,u,t\in [a,b] }
\frac{|h_{sut}|}{|u-s|^\gamma |t-u|^\rho}\\
\|h\|_\mu &= &
\inf \left \{\sum_i \|h_i\|_{\rho_i,\mu-\rho_i} ; (\rho_{i},h_{i})_{i \in \N} \textrm{ with } h_{i} \in \cac_3([a,b];V),
 \sum_i h_i =h , 0 < \rho_i < \mu  \right\}. \nonumber
\end{eqnarray}
Then  $\|\cdot\|_\mu$  is a norm on $\cac_3([a,b];V)$, see \cite{Gu}, and we define
$$
\cac_3^\mu([a,b];V):=\lcl h\in\cac_3([a,b];V);\, \|h\|_\mu<\infty \rcl.
$$
Eventually,
let $\cac_3^{1+}([a,b];V) = \cup_{\mu > 1} \cac_3^\mu([a,b];V)$
and  note that the same kind of norms can be considered on the
spaces $\cz \cac_3([a;b];V)$, leading to the definition of  the  spaces
$\cz \cac_3^\mu([a;b];V)$ and $\cz \cac_3^{1+}([a,b];V)$.
We shall also use a supremum norm on spaces $\cac_k$, which will be denoted by $\|\cdot\|_{\infty}$ in all cases.

\smallskip

The crucial point in this algebraic approach to the  integration of irregular
paths is that the operator
$\der$ can be inverted under mild  smoothness assumptions. This
inverse is called $\laa$. The proof of the following proposition  may be found
in \cite{Gu}, and in a simpler form in~\cite{GT}.

\medskip

\begin{prop}
\label{prop:Lambda}
There exists a unique linear map $\Lambda: \cz \cac^{1+}_3([a,b];V)
\to \cac_2^{1+}([a,b];V)$ such that
$$
\der \Lambda  = \id_{\cz \cac_3^{1+}([a,b];V)}
\quad \mbox{ and } \quad \quad
\Lambda  \der= \id_{\cac_2^{1+}([a,b];V)}.
$$
In other words, for any $h\in\cac^{1+}_3([a,b];V)$ such that $\der h=0$,
there exists a unique $g=\laa(h)\in\cac_2^{1+}([a,b];V)$ such that $\der g=h$.
Furthermore, for any $\mu > 1$,
the map $\laa$ is continuous from $\cz \cac^{\mu}_3([a,b];V)$
to $\cac_2^{\mu}([a,b];V)$ and we have
\begin{equation}\label{ineqla}
\|\Lambda h\|_{\mu} \le \frac{1}{2^\mu-2} \|h\|_{\mu} ,\qquad h \in
\cz \cac^{\mu}_3([a,b];V).
\end{equation}
\end{prop}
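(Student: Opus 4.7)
The plan is to treat Proposition \ref{prop:Lambda} as the cohomological sewing lemma, splitting the argument into uniqueness, existence via dyadic refinement, and verification of the two inverse identities $\der\Lambda=\id$ and $\Lambda\der=\id$.

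For uniqueness, I would first observe that $\cz\cac_2([a,b];V)\cap\cac_2^{1+}([a,b];V)=\{0\}$: any such $g$ is closed, hence by Lemma \ref{exd} of the form $\der f$ with $f\in\cac_1$, and $|f_t-f_s|=|g_{st}|\le\|g\|_\mu|t-s|^\mu$ with $\mu>1$ forces $f$ to be constant, so $g\equiv 0$. Given two candidate maps $\Lambda_1,\Lambda_2$, for any $h\in\cz\cac_3^{1+}$ the difference $\Lambda_1 h-\Lambda_2 h$ sits in exactly this trivial intersection, so $\Lambda_1=\Lambda_2$.

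For existence, fix $h\in\cz\cac_3^\mu$ with $\mu>1$ and construct $g=\Lambda h$ by iterated bisection. Setting $g^{(0)}\equiv 0$ and $m_{st}:=(s+t)/2$, define recursively
\[
g^{(n+1)}_{st} := g^{(n)}_{s,m_{st}} + g^{(n)}_{m_{st},t} + h_{s,m_{st},t}.
\]
Unrolling the recursion, $g^{(n+1)}_{st}-g^{(n)}_{st}$ is a sum of $2^n$ midpoint values of $h$ taken over dyadic subintervals of $[s,t]$ of length $(t-s)/2^n$; a short computation bounds this sum by $\|h\|_{\rho,\mu-\rho}(t-s)^\mu\cdot 2^{n(1-\mu)}/2^\mu$ for any admissible $\rho\in(0,\mu)$. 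Since $\mu>1$ the levels form a convergent geometric series, and taking the infimum over decompositions $h=\sum_i h_i$ defining $\|h\|_\mu$ in \eqref{eq:notation-norm-C3} produces both a pointwise limit $g:=\lim_n g^{(n)}\in\cac_2^\mu$ and the constant $\|g\|_\mu\le\|h\|_\mu/(2^\mu-2)$ asserted in \eqref{ineqla}.

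Finally I would verify the two inverse identities. The equality $\der g=h$ is obtained by passing to the limit in $\der g^{(n+1)}_{sut}$: the cocycle condition $\der h=0$ ensures self-consistency of the bisection scheme, while continuity of $h$ and the H\"older decay of $g^{(n+1)}-g^{(n)}$ on shrinking intervals extend the identity from dyadic triples to arbitrary $(s,u,t)$. For $\Lambda\der=\id_{\cac_2^{1+}}$, given $\phi\in\cac_2^{1+}$ set $h:=\der\phi$, which lies in $\cz\cac_3^{1+}$ since $\der^2=0$; then $\Lambda h$ and $\phi$ both belong to $\cac_2^{1+}$ and satisfy $\der(\Lambda h-\phi)=0$, so the uniqueness step forces $\Lambda h=\phi$. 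The main obstacle, as is customary for the sewing lemma, lies in the existence step: one must align the dyadic combinatorics with the mixed semi-norms $\|\cdot\|_{\rho,\mu-\rho}$ governing $\|\cdot\|_\mu$ so the geometric series yields exactly $(2^\mu-2)^{-1}$, and then justify that the pointwise limit satisfies $\der g=h$ as a genuine identity in $\cac_3$, not merely at dyadic points.
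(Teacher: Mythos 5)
The paper gives no proof of this proposition, deferring instead to \cite{Gu} and \cite{GT}; your dyadic-bisection argument is precisely the standard sewing-lemma proof found there — uniqueness from the triviality of $\cz\cac_2([a,b];V)\cap\cac_2^{1+}([a,b];V)$, existence and the constant $(2^\mu-2)^{-1}$ from the geometric series $\sum_{n\ge 0}2^{-\mu}2^{n(1-\mu)}$ over bisection levels, and $\Lambda\der=\id$ as a corollary of uniqueness. The one step you leave compressed — that $\der g=h$ holds for arbitrary rather than merely dyadic intermediate points, which rests on the partition-independence (equivalently, additivity over concatenated intervals) of the limiting procedure guaranteed by $\der h=0$ — is exactly the point worked out in the cited references, so the proposal is correct and follows the same route.
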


\medskip

This mapping  $\laa$  allows to construct a generalized Young integral:
\begin{cor}
\label{cor:integration}
For any 1-increment $g\in\cac_2 ([a,b];V)$ such that $\der g\in\cac_3^{1+}([a,b];V)$
set
$
\der f = (\id-\Lambda \der) g
$.
Then
$$
\der f_{st} = \lim_{|\Pi_{st}| \to 0} \sum_{i=0}^n g_{t_i\,t_{i+1}}
$$
for $a\leq s < t \leq b$, where the limit is taken over any partition $\Pi_{st} = \{t_0=s,\dots,
t_n=t\}$ of $[s,t]$, whose mesh tends to zero. Thus, the
1-increment $\der f$ is the indefinite integral of the 1-increment $g$.
\end{cor}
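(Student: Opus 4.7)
The plan is to realize $\der f$ as the cohomologous representative of $g$ modulo a remainder which has enough regularity to vanish under Riemann sums.

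First, I would verify that the definition $\der f := (\id - \Lambda \der)g$ is legitimate, i.e.\ that the right-hand side actually lies in $\cb\cac_1([a,b];V)$. Since $\der g \in \cac_3^{1+}([a,b];V)$ and $\der \der = 0$, we have $\der g \in \cz\cac_3^{1+}([a,b];V)$, so Proposition~\ref{prop:Lambda} applies and yields $\Lambda \der g \in \cac_2^{1+}([a,b];V)$. Applying $\der$ to $(\id - \Lambda\der)g$ gives $\der g - (\der\Lambda)\der g = \der g - \der g = 0$ by the first identity $\der\Lambda = \id$ on $\cz\cac_3^{1+}$. Therefore $(\id-\Lambda\der)g \in \cz\cac_2([a,b];V) = \cb\cac_1([a,b];V)$ by Lemma~\ref{exd}, and $f \in \cac_1([a,b];V)$ exists (uniquely up to an additive constant).

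Next, for a partition $\Pi_{st} = \{s = t_0 < \cdots < t_n = t\}$ introduce the Riemann sum operator
\begin{equation*}
J_{\Pi_{st}}(h) := \sum_{i=0}^{n-1} h_{t_i t_{i+1}}, \qquad h \in \cac_2([a,b];V).
\end{equation*}
Applied to a pure coboundary $\der f$, the sum telescopes: $J_{\Pi_{st}}(\der f) = f_t - f_s = \der f_{st}$ for \emph{every} partition. Hence the decomposition $g = \der f + \Lambda\der g$ gives
\begin{equation*}
J_{\Pi_{st}}(g) - \der f_{st} = J_{\Pi_{st}}(\Lambda \der g).
\end{equation*}

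The main step is then to show that $J_{\Pi_{st}}(\Lambda \der g) \to 0$ as $|\Pi_{st}| \to 0$. By Proposition~\ref{prop:Lambda}, there exist $\mu > 1$ and a constant $C = \|\Lambda \der g\|_\mu$ with $|(\Lambda \der g)_{uv}| \le C |v-u|^\mu$ for all $u,v \in [a,b]$. Consequently
\begin{equation*}
\bigl| J_{\Pi_{st}}(\Lambda \der g) \bigr|
\le C \sum_{i=0}^{n-1} |t_{i+1} - t_i|^{\mu}
\le C\,|\Pi_{st}|^{\mu-1}\, (t-s),
\end{equation*}
which tends to $0$ since $\mu > 1$. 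Combining the two displays yields the announced limit $\der f_{st} = \lim_{|\Pi_{st}|\to 0} J_{\Pi_{st}}(g)$.

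I do not anticipate any genuine obstacle: the only non-routine point is recognizing that $\Lambda \der g$ plays the role of the ``super-additive remainder'' whose $\mu$-H\"older bound forces Riemann sums to vanish, which is exactly what Proposition~\ref{prop:Lambda} was designed to give. The rest is the telescoping identity for coboundaries and Lemma~\ref{exd} for the existence of $f$.
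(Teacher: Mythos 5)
Your proof is correct and is precisely the standard argument for this corollary (the one the paper implicitly defers to in \cite{Gu,GT}): write $g=\der f+\Lambda\der g$, note that Riemann sums of the coboundary $\der f$ telescope exactly, and kill the Riemann sums of $\Lambda\der g$ using its $\mu$-H\"older bound with $\mu>1$ from Proposition~\ref{prop:Lambda}. The only discrepancy is the upper summation index, where the statement's $\sum_{i=0}^{n}$ is a typo and your $\sum_{i=0}^{n-1}$ is the correct reading.
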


\medskip

We also need some product rules for the operator $\der$. For this
recall the following convention:
for  $g\in\cac_n([a,b];\R^{l,d})$ and $h\in\cac_m( [a,b];\R^{d,p}) $ let  $gh$
be the element of $\cac_{n+m-1}( [a,b];\R^{l,p})$ defined by
\begin{equation}\label{cvpdt}
(gh)_{t_1,\dots,t_{m+n-1}}=
g_{t_1,\dots,t_{n}} h_{t_{n},\dots,t_{m+n-1}},
\end{equation}
for $t_1,\dots,t_{m+n-1}\in [a,b].$
\begin{prop}\label{difrul} On the spaces of increments $\cac_{1},\cac_{2}$, the following relations hold true:
\begin{enumerate}
\item[{\it(i)}]
Let $g\in\cac_1([a,b];\R^{l,d})$ and $h\in\cac_1([a,b],\R^d)$. Then
$gh\in\cac_1(\R^l)$ and
\begin{equation*}
\der (gh) = \der g\,  h + g\, \der h.
\end{equation*}
\item[{\it(ii)}]
Let $g\in\cac_1([a,b]; \R^{l,d})$ and $h\in\cac_2([a,b];\R^d)$. Then
$gh\in\cac_2([a,b];\R^l)$ and
\begin{equation*}
\der (gh) = - \der g\, h + g \,\der h.
\end{equation*}
\end{enumerate}
\end{prop}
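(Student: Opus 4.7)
Both identities reduce to direct computations from the definition of $\der$ given in \eqref{eq:coboundary} together with the product convention \eqref{cvpdt}. I would treat the two parts separately.

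For part \textit{(i)}, since $g,h\in\cac_1$ the product $(gh)_t = g_t h_t$ lies in $\cac_1$, so $(\der(gh))_{st}=g_t h_t - g_s h_s$. I would telescope this difference via the intermediate term $g_s h_t$:
\begin{equation*}
g_t h_t - g_s h_s \;=\; (g_t-g_s)\, h_t \;+\; g_s\, (h_t-h_s).
\end{equation*}
By convention \eqref{cvpdt}, $(\der g\, h)_{st}=(\der g)_{st}\, h_t=(g_t-g_s) h_t$ and $(g\, \der h)_{st}=g_s\, (\der h)_{st}=g_s(h_t-h_s)$, which matches the two summands above and yields the stated identity.

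For part \textit{(ii)}, with $g\in\cac_1$ and $h\in\cac_2$ the product is $(gh)_{st}=g_s h_{st}$, an element of $\cac_2$. Applying \eqref{eq:coboundary} directly gives
\begin{equation*}
(\der(gh))_{sut} \;=\; (gh)_{st} - (gh)_{su} - (gh)_{ut} \;=\; g_s h_{st} - g_s h_{su} - g_u h_{ut}.
\end{equation*}
I would then add and subtract $g_s h_{ut}$ to group the three $g_s$-terms into $g_s(h_{st}-h_{su}-h_{ut}) = g_s(\der h)_{sut} = (g\,\der h)_{sut}$, leaving the residual $-(g_u-g_s) h_{ut}$, which by \eqref{cvpdt} equals $-(\der g\, h)_{sut}$. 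This is exactly the asserted formula.

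The only subtlety worth flagging is the bookkeeping dictated by \eqref{cvpdt}: when $\der g\in\cac_2$ is paired with $h\in\cac_k$, the shared index is the \emph{last} argument of $\der g$, forcing $h$ to be evaluated starting at the intermediate time; whereas when $g\in\cac_1$ is paired with $\der h$, $g$ is evaluated at the first time. Once this is tracked correctly, both statements are one-line algebraic identities and no analytic obstacle arises.
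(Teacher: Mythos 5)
Your proof is correct and is exactly the standard direct verification from the definitions \eqref{eq:coboundary} and \eqref{cvpdt}; the paper itself omits the argument (the proposition is recalled from the algebraic integration literature), and your bookkeeping of which time argument each factor is evaluated at is precisely the point that needs care. Nothing to add.
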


\begin{rem}
Proposition \ref{difrul} will also be applied for products $gh$ where both $g$ and $h$ are function valued increments. In this case the spatial product will be understood as a pointwise product, namely $[gh](\xi)=g(\xi) h(\xi)$.
\end{rem}

\smallskip


\subsection{Weakly controlled processes}
In this section we define the kind of generalized integral we wish to consider in order to give a meaning to equation \eqref{main:spde2}. This notion is based on the concept of weakly controlled process, which we proceed to recall.

\smallskip

Let us start with some additional notation and assumptions: first notice that throughout the paragraph we will use  both the notations $\ist f dg$ and $\cj_{st}(f\, dg)$ for the integral of a function $f$ with respect to a given function $g$ on the interval~$[s,t]$. Then recall that the basic assumption we shall use on our driving noise $x$ is the following:
\begin{hyp}\label{hyp1:x}
Let $\ga$ be a constant greater than $1/3$. The $\R^d$-valued $\ga$-H\"older path $x$ admits a L\'evy area,
 i.e. a process $\xd=\cj(dx dx)\in\cac_2^{2\ga}([0,T];\R^{d, d})$, which satisfies
$\der\xd=\der x\otimes \der x$,
that is
$$  \der\mathbf{x}^{\mathbf{2};ij}_{sut}
=
\der x^{i}_{su} \, \der x^{j}_{ut},
\quad \textrm{for all } \quad s,u,t\in\ott, \, i,j\in\{1,\ldots,d  \}.
$$
The couple $\bx=(x,\bx^{\2})$ is called rough path above $x$, and we also assume that $\bx$ is a geometrical rough path, meaning that if we denote by $\mathbf{x^{2,s}}$ the
symmetric part of $\mathbf{x^2}$ (namely $\mathbf{x^{2,s}}=\frac{1}{2}(\mathbf{x^2}+(\mathbf{x^2})^\ast)$), then the following rule holds true for $0\leq s<t\leq T$:
\begin{equation}\label{eq:geometric-rule}
\mathbf{x}^\mathbf{2,s}_{st}=\frac{1}{2} \,
\delta x_{st}\otimes\delta x_{st}.
\end{equation}
\end{hyp}

\smallskip

As mentioned above, we will be concerned with weakly controlled processes, that is paths whose increments can be expressed in a simple way in terms of the increments of $x$. More specifically:
\begin{defn}\label{def:fcp}
Let $a \leq b \leq T$ and let $z$ be a path in $\cac_1^\ka([a,b];V^{m})$ with $\ka\le\ga$ and $2\ka+\ga>1$.
We say that $z$ is a weakly controlled path based on $x$, if $\der z\in\cac_2^\ka([a,b];V^m)$ can be decomposed into
\begin{equation}\label{weak:dcp}
\der z^i=z^{x;il}\, \der x^l+ \rho^i,
\end{equation}
with $z^{x;il}\in\cac_1^\ka([a,b];V)$ and $\rho^i \in \cac_2^{2\ka}([a,b];V)$, for any $1\leq i\leq m$, $1\leq l \leq d$.
The space of weakly controlled paths on $[a,b]$ with H\"older continuity $\ka$ will be denoted by $\cq_{\bx}^{\ka}([a,b];V^m)$, and a path $z\in\cq_{\bx}^{\ka}([a,b];V^m)$ should be considered in fact as a couple $(z,z^{x})$.
The norm on $\cq_{\bx}^{\ka}([a,b];V^m)$ is given
by
\begin{equation*}
\cn[z;\cq_{\bx}^{\ka}([a,b];V^m)]
= \|z\|_{\ka} + \|z^{x}\|_{\infty} + \|z^{x}\|_{\ka} + \|\rho\|_{2\ka}.
\end{equation*}
\end{defn}

Let us first see how smooth functions depending on time act on weakly controlled paths, in a proposition which is a mere variation of \cite{Gu}:
\begin{prop}\label{cp:weak-phi}
Let $z\in\cq_{\bx}^{\ka}([a,b];V^{m_1})$ with decomposition (\ref{weak:dcp}) and initial condition $z_0=\al$, and
$G \in   C_{b}^{\tau,2}([a,b]\times\R^{m_1};\R^{m_2})$ for $\tau>0$. Set $\hat z_t=G(t,z_t)$, with initial condition $\hal=G(a,\al)$. Then the increments of $\hat z$ can be decomposed into
$$
\der \hat z_{st}= \hat z_{s}^{x} \, \der x_{st} +\hro + \der G(\cdot,z_t)_{st}
$$
with
$$
\hat z^{x}_s= \nabla_z G(s,z_s)z^x_s
\quad\mbox{ and }\quad
\hro_{st}= \nabla_z G(s,z_s)\rho_{st} + \lc (\der G(s,z))_{st}-\nabla_z G(s,z_s)(\der z)_{st} \rc.
$$
Furthermore $\hat z \in \cq_{\bx}^{\ka}([a,b];V^{m_2})$, and
\begin{equation}\label{bnd:phi}
\cn[\hat z;\cq_{\bx}^{\ka}([a,b];V^{m_2})]\le
c_{G}\lp 1+\cn^2[z;\cq_{\bx}^{\ka}([a,b];V^{m_1})]  \rp.
\end{equation}
\end{prop}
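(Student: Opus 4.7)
The plan is to Taylor-expand the increment $\delta \hat z_{st} = G(t, z_t) - G(s, z_s)$ in both arguments. Inserting the mixed evaluation $G(s, z_t)$ yields the clean splitting
\begin{equation*}
\delta \hat z_{st} = \bigl[G(t, z_t) - G(s, z_t)\bigr] + \bigl[G(s, z_t) - G(s, z_s)\bigr],
\end{equation*}
where the first bracket is exactly $\delta G(\cdot, z_t)_{st}$. For the second one I would use a first-order Taylor expansion in the space variable,
\begin{equation*}
G(s, z_t) - G(s, z_s) = \nabla_z G(s, z_s)\,\delta z_{st} + r_{st},
\qquad |r_{st}| \le \tfrac{1}{2}\,\|\nabla_z^{2} G\|_\infty\,|\delta z_{st}|^{2},
\end{equation*}
and then substitute the controlled decomposition $\delta z_{st} = z_s^{x}\,\delta x_{st} + \rho_{st}$ into the linear term. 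This immediately produces the announced identity with $\hat z^{x}_s = \nabla_z G(s, z_s)\,z_s^{x}$ and $\hat \rho_{st} = \nabla_z G(s, z_s)\,\rho_{st} + r_{st}$, once one notices that $r_{st}$ coincides with the bracket $(\delta G(s, z))_{st} - \nabla_z G(s, z_s)(\delta z)_{st}$ appearing in the statement.

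For the norm estimate \eqref{bnd:phi}, I would control each summand of $\cn[\hat z;\cq_{\bx}^{\ka}([a,b];V^{m_2})]$ separately. The supremum bound $\|\hat z^{x}\|_\infty \le \|\nabla_z G\|_\infty \|z^{x}\|_\infty$ is immediate, and the two pieces of $\hat \rho$ contribute respectively $\|\nabla_z G\|_\infty\,\|\rho\|_{2\ka}$ and $\tfrac{1}{2}\,\|\nabla_z^{2} G\|_\infty\,\|z\|_{\ka}^{2}$, the square being the source of the quadratic factor on the right-hand side of \eqref{bnd:phi}. The drift $\delta G(\cdot, z_t)$ is $\tau$-H\"older in time and gets absorbed into $\hat \rho$ provided $\tau \ge 2\ka$, a condition I take to be implicit in the hypothesis that $G \in C_b^{\tau,2}$.

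The main technical point is the $\ka$-H\"older estimate of $\hat z^{x}_s = \nabla_z G(s, z_s)\,z_s^{x}$, since this is the only place where the two factors of the product interact nontrivially. I would write
\begin{equation*}
\delta \hat z^{x}_{st} = \bigl[\nabla_z G(t, z_t) - \nabla_z G(s, z_s)\bigr]\,z_t^{x} + \nabla_z G(s, z_s)\,\delta z^{x}_{st},
\end{equation*}
and bound the first difference, via the joint $C_b^{\tau,2}$-regularity of $G$, by $\|\partial_t \nabla_z G\|_\infty |t-s|^{\tau} + \|\nabla_z^{2} G\|_\infty\,\|z\|_{\ka}\,|t-s|^{\ka}$. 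Multiplying by $\|z^{x}\|_\infty$ generates the product $\|z\|_{\ka}\,\|z^{x}\|_\infty$ that is responsible for the $\cn^{2}$ in \eqref{bnd:phi}, while the second term is handled directly through $\|\nabla_z G\|_\infty\,\|z^{x}\|_{\ka}$. Collecting these estimates yields the desired bound with a constant $c_G$ depending only on $\|G\|_{C_b^{\tau,2}}$.
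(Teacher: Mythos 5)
Your argument is correct and is precisely the standard controlled-path composition argument that the paper itself delegates to \cite{Gu}: insert the mixed evaluation $G(s,z_t)$, Taylor-expand in the space variable to first order, and substitute the decomposition $\delta z_{st}=z^x_s\,\delta x_{st}+\rho_{st}$ into the linear term, then estimate each component of the $\cq_{\bx}^{\ka}$-norm (the product rule for $\delta \hat z^x$ being the source of the quadratic dependence). Your remark that the drift term $\delta G(\cdot,z_t)$ is only absorbed into a $2\ka$-remainder when $\tau\ge 2\ka$ correctly identifies an implicit strengthening of the paper's bare hypothesis $\tau>0$ that is needed for the claim $\hat z\in\cq_{\bx}^{\ka}$ as stated.
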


We now turn  to a proposition giving an expression for the integration of weakly controlled paths with respect to $x$, when an additional term involving Lebesgue integrals is present. This is again a mere variation on the results contained in \cite{Gu,GT}, for which we introduce first some additional notation:

\begin{notation}
Let $\eta$ be a path in $\cac_{1}^{0}([a,b];V)$, where $\cac_{1}^{0}([a,b];V)$ stands for the space of $V$-valued continuous and bounded functions defined on $[a,b]$. For all $a\le s\le t \le b$ we will write $\ci_{st}(\eta)$ for the Lebesgue integral $\int_{s}^{t} \eta_{u} \, du$. We also write  $\Id$ for the identity function on $\R$, whose increments are simply given by $\der\Id_{st}=t-s$. We thus trivially have $\ci_{st}(\eta) = \eta_{s}\der\Id_{st}+ o(|t-s|)$.
\end{notation}

\begin{prop}\label{intg:wcdx}
For a given $\ga>1/3$ and $\ka<\ga$,
let $x$ be a process satisfying Hypothesis~\ref{hyp1:x}. Furthermore,  let
$\eta\in \cac_{1}^{0}([a,b];V)$, and $\mu\in\cq_{\bx}^{\ka}([a,b];V^d)$ whose increments can be decomposed as
\begin{align}
\der \mu^l&=\mu^{x;lk} \, \der x^k +\rho^l,
\quad\mbox{ where }\quad
\mu^x\in\cac_1^\ka([a,b];V^{d,d}), \, \rho\in\cac_2^{2\ka}([a,b];V^d).  \label{dcp:z1}
\end{align}
Define $z$ by $z_a=\al\in V$ and
\begin{equation}\label{dcp:wcdx}
\der z=\mu^l \, \der x^l  +\mu^{x;lk} \, \mathbf{x}^{\mathbf{2};kl}
+ \ci(\eta) + \laa(\rho^l\, \der x^l+\der \mu^{x;lk}\mathbf{x}^{\mathbf{2};kl}).
\end{equation}
Then:

\smallskip

\noindent
\emph{(i)}
$z$ is well-defined as an element of $\cq_{\bx}^{\ka}([a,b];V)$, and $\der z_{st}$ coincides with the Lebesgue-Stieljes  integral $\int_{s}^{t} \mu^{i}_{u} \, dx^{i}_{u} + \int_{s}^{t} \eta_{u} \, du$ whenever $x$ is a differentiable function.

\smallskip

\noindent
\emph{(ii)}
The semi-norm of $z$ in $\cq_{\bx}^{\ka}([a,b];V)$ can be estimated as
\begin{align}\label{bnd:norm-iwcdx}
&\cn[z;\cq_{\bx}^{\ka}([a,b];V)]\\
&\le
c_{x}
\lp 1 +\|\bet\| + (b-a)^{\ga-\ka}\lcl\|\bet\|+\cn[\mu;\cq_{\ka,\bet}([a,b];V^d)]+\cn[\eta;\cac_1^0([a,b];V)]\rcl\rp, \notag
\end{align}
where $\bet\equiv\mu_a$, and where the constant $c_{x}$ can be bounded as follows: $c_x\le c [ |x|_{\ga}+|\xd|_{2\ga}]$, for a universal constant $c$.

\smallskip

\noindent
\emph{(iii)}
It holds
\begin{equation}\label{rsums:iwcdx}
\der z_{st}
=\lim_{|\Pi_{st}|\to 0}\sum_{q=0}^n
\lc \mu^l_{t_{q}} \, \der x^l_{t_{q}, t_{q+1}}+\eta_{t_{q}} \delta\Id_{t_{q}, t_{q+1}}
+ \mu^{x;lk}_{t_{q}} \mathbf{x}^{\mathbf{2};kl}_{t_{q}, t_{q+1}} \rc
\end{equation} for any $a\le s<t\le b$,
where the limit is taken over all partitions
$\Pi_{st} = \{s=t_0,\dots,t_n=t\}$
of $[s,t]$, as the mesh of the partition goes to zero.
\end{prop}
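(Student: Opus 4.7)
The plan is to identify the right-hand side of \eqref{dcp:wcdx} with $(\id-\Lambda\der)g$ for an explicit $2$-increment $g$, and then read off the three claims from Corollary~\ref{cor:integration} and Proposition~\ref{prop:Lambda}. The natural candidate, motivated by the usual pointwise approximation of a rough integral against a weakly controlled integrand, is
\begin{equation*}
g_{st} \;:=\; \mu_s^l\, \der x_{st}^l + \mu_s^{x;lk}\, \bx_{st}^{\2;kl} + \ci_{st}(\eta),
\qquad a\le s\le t\le b.
\end{equation*}
Since $\ci(\eta)\in\cz\cac_2$, its contribution to $\der g$ vanishes, and in view of the convention \eqref{cvpdt} one expects the argument of $\Lambda$ in \eqref{dcp:wcdx} to coincide with $-\der g$.

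The key step is thus to compute $\der g$ explicitly. Applying Proposition~\ref{difrul}\,(ii) to the first summand of $g$ and substituting the decomposition \eqref{dcp:z1} produces the term $-\mu_s^{x;lk}\der x_{su}^k \der x_{ut}^l$ together with $-\rho^l_{su}\der x^l_{ut}$; applying Proposition~\ref{difrul}\,(ii) to the second summand and invoking the geometric identity $\der \bx^{\2;kl}_{sut}=\der x^k_{su}\der x^l_{ut}$ of Hypothesis~\ref{hyp1:x} produces $+\mu_s^{x;lk}\der x^k_{su}\der x^l_{ut}$ together with $-\der\mu^{x;lk}_{su}\bx^{\2;kl}_{ut}$. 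The $\mu_s^x$-pieces cancel and one is left with
\begin{equation*}
\der g_{sut} \;=\; -\rho^l_{su}\der x^l_{ut} - \der\mu^{x;lk}_{su}\, \bx^{\2;kl}_{ut},
\end{equation*}
which belongs to $\cz\cac_3^{2\ka+\ga}$ by the regularity of its factors. The assumption $2\ka+\ga>1$ places $\der g$ in the domain of $\Lambda$, so Proposition~\ref{prop:Lambda} gives $\Lambda\der g\in\cac_2^{2\ka+\ga}$ and \eqref{dcp:wcdx} indeed reads $\der z=(\id-\Lambda\der)g$.

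Once this identification is in place, Corollary~\ref{cor:integration} directly yields the Riemann sum representation \eqref{rsums:iwcdx} of~(iii), while the decomposition
\begin{equation*}
\der z = \mu^l\der x^l + \bigl[ \mu^{x;lk}\bx^{\2;kl} + \ci(\eta) - \Lambda \der g \bigr]
\end{equation*}
exhibits $z$ as an element of $\cq_\bx^\ka([a,b];V)$ with $z^x=\mu$, since each bracketed summand lies in $\cac_2^{2\ka}$. When $x$ is smooth, the canonical lift satisfies $\bx^{\2;kl}_{st}=\int_s^t (x_u^k-x_s^k)\,dx^l_u$, and the sum in \eqref{rsums:iwcdx} is recognised as a second-order corrected Riemann sum converging to $\int_s^t \mu_u^l\,dx^l_u + \int_s^t \eta_u\,du$, settling the Lebesgue--Stieltjes compatibility of~(i). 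For the bound~(ii), I apply the continuity estimate \eqref{ineqla} to the two pieces of $\der g$ to obtain $\|\Lambda \der g\|_{2\ka+\ga}\le c_x(\|\rho\|_{2\ka}+\|\mu^x\|_\ka)$ with $c_x\le c(|x|_\ga+|\bx^\2|_{2\ga})$, combine it with elementary bounds on the remaining summands of $g$, and use $\|\mu\|_\infty \le |\bet| + (b-a)^\ka\|\mu\|_\ka$ to control $\|z^x\|_\infty$; the factor $(b-a)^{\ga-\ka}$ in \eqref{bnd:norm-iwcdx} then emerges when converting $(2\ka+\ga)$- and $2\ga$-norms back to $\ka$-norms over a subinterval of length $b-a$. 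The main technical obstacle is really the algebraic cancellation in $\der g$: without the geometric rule \eqref{eq:geometric-rule} the two $\mu_s^x$-quadratic terms would not match, $\der g$ would fail to lie in $\cac_3^{1+}$, and the whole construction would collapse at the very first step.
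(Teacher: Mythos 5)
Your argument is correct and is precisely the standard construction that the paper itself does not reproduce, deferring instead to \cite{Gu,GT}: you identify $\der z=(\id-\Lambda\der)g$ for $g_{st}=\mu^l_s\der x^l_{st}+\mu^{x;lk}_s\bx^{\2;kl}_{st}+\ci_{st}(\eta)$, verify via Proposition \ref{difrul} and the relation $\der\bx^{\2;kl}_{sut}=\der x^k_{su}\der x^l_{ut}$ that $-\der g$ is exactly the argument of $\Lambda$ in \eqref{dcp:wcdx} and lies in $\cz\cac_3^{2\ka+\ga}$, and then read off (i)--(iii) from Proposition \ref{prop:Lambda} and Corollary \ref{cor:integration}. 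The only points left tacit are harmless: the replacement of $\ci_{t_q t_{q+1}}(\eta)$ by $\eta_{t_q}\der\Id_{t_q t_{q+1}}$ in the Riemann sums of (iii) uses the uniform continuity of $\eta$, and note that the cancellation of the quadratic terms in $\der g$ uses only $\der\bx^{\2}=\der x\otimes\der x$ from Hypothesis \ref{hyp1:x}, not the symmetry condition \eqref{eq:geometric-rule}.
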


\begin{rem}\label{rmk:diff-controlled-proc}
A lot of the considerations below will be based on differentiation arguments under $\int$ signs. They can be justified easily in natural situations e.g. when $\mu$ is a controlled process taking values in a space of differentiable functions of the form $V=W^{k,p}$ with $k\ge 1$. In this context, let $z$ be defined by  \eqref{dcp:wcdx}. Then $Dz$ verifies:
\begin{equation}
\der Dz=D\mu^l \, \der x^l+D\mu^{x;lk} \, \mathbf{x}^{\mathbf{2};kl}
+ \ci\lp D\eta \rp
+\laa\lp D\rho^l\, \der x^l+\der D\mu^{x;lk}\mathbf{x}^{\mathbf{2};kl} \rp.\label{dcp:Dwcdx}
\end{equation}
\end{rem}

With these considerations in hand we get in particular a space-time expansion for the process $z$ in the case of a space $V$ of differentiable functions.

\begin{cor}\label{taylor:wcdx}
Assume the hypothesis of Proposition \ref{intg:wcdx} hold true, and that $V=\cac^{2}$. Then
\begin{multline*}
z_t(\nu)-z_s(\te)=\eta_s(\te)\, \delta\Id_{st}+ \mu^l_s(\te)\, \der x^l_{st}+\mu^{x;lk}_s(\te)\,\mathbf{x}^{\mathbf{2};kl}_{st}\\
+Dz_s(\te)\lc\nu-\te\rc+D\mu^l_s(\te) \, \der x^l_{st} \lc\nu-\te\rc
+\crr\lp(s,\te),(t,\nu)\rp,
\end{multline*}
for all $s,t\in[a,b]$ and any $\te,\nu\in \R^n$, where
\begin{equation*}
\lln\crr\lp(s,\te),(t,\nu)\rp\rrn\approx o\lp|t-s|+|\nu-\te|\rp.
\end{equation*}
\end{cor}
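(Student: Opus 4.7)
The plan is to decompose the joint space-time increment as
$$z_t(\nu) - z_s(\te) = \der z_{st}(\nu) + \bigl[z_s(\nu) - z_s(\te)\bigr],$$
treating the pure time increment via the controlled representation \eqref{dcp:wcdx} and the pure space increment via a Taylor expansion that exploits the assumption $V = \cac^{2}$.

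For the time part, I would evaluate \eqref{dcp:wcdx} pointwise at $\nu$ to get
$$\der z_{st}(\nu) = \mu^l_s(\nu)\,\der x^l_{st} + \mu^{x;lk}_s(\nu)\,\mathbf{x}^{\mathbf{2};kl}_{st} + \ci_{st}(\eta)(\nu) + \laa\bigl(\rho^l\,\der x^l + \der \mu^{x;lk}\,\mathbf{x}^{\mathbf{2};kl}\bigr)_{st}(\nu).$$
By Proposition \ref{prop:Lambda} the $\laa$-term has size $|t-s|^{3\kappa}$ with $3\kappa > 1$, hence is $o(|t-s|)$, while the continuity of $\eta$ near $(s,\nu)$ gives $\ci_{st}(\eta)(\nu) = \eta_s(\nu)(t-s) + o(|t-s|)$. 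This produces the correct leading terms in time, but with coefficients still sampled at the running point $\nu$.

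It then remains to relocate every coefficient to the base point $\te$ by a spatial Taylor expansion in $V=\cac^{2}$:
\begin{align*}
\mu^l_s(\nu) &= \mu^l_s(\te) + D\mu^l_s(\te)[\nu-\te] + o(|\nu-\te|), \\
\mu^{x;lk}_s(\nu) &= \mu^{x;lk}_s(\te) + O(|\nu-\te|), \qquad \eta_s(\nu) = \eta_s(\te) + o(1),\\
z_s(\nu) - z_s(\te) &= Dz_s(\te)[\nu-\te] + o(|\nu-\te|).
\end{align*}
Substituting these into the previous display and grouping yields the five explicit main terms of the claimed formula, while $\crr\bigl((s,\te),(t,\nu)\bigr)$ assembles cross contributions of the form $o(|\nu-\te|)\,|t-s|^\gamma$, $O(|\nu-\te|)\,|t-s|^{2\gamma}$, $o(1)\,(t-s)$, $o(|t-s|)$ and $o(|\nu-\te|)$. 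The one mildly delicate bookkeeping point, which I expect to be the only real step to check, is that each of these is genuinely $o(|t-s| + |\nu-\te|)$ as $(t,\nu)\to(s,\te)$: for the $\mathbf{x}^{\mathbf{2}}$-coupled remainder this is not automatic because $2\gamma$ may lie below $1$, but using $|\nu-\te|\le |t-s|+|\nu-\te|$ bounds the quotient by $|t-s|^{2\gamma}\to 0$, and analogous estimates close the remaining cases.
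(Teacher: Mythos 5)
Your argument is correct, and it reaches the result by the mirror image of the paper's decomposition: the paper splits through the intermediate point $(t,\te)$, writing $z_t(\nu)-z_s(\te)=[z_t(\nu)-z_t(\te)]+[z_t(\te)-z_s(\te)]$, whereas you split through $(s,\nu)$. The practical difference is where the cross term $D\mu^l_s(\te)\,\der x^l_{st}[\nu-\te]$ comes from. In the paper it is produced by relocating $Dz_t(\te)$ back to time $s$ via the controlled structure of $Dz$ (Remark \ref{rmk:diff-controlled-proc}, i.e.\ $\der Dz=D\mu^l\,\der x^l+\cdots$), so the proof leans on differentiating the rough integral in the space variable. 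In your version the time increment is already in controlled form at $\nu$, and the cross term falls out of the first-order spatial Taylor expansion of the coefficient $\mu^l_s$; you only need $z_s$, $\mu_s$, $\mu^x_s$, $\eta_s$ to be smooth enough in $\te$, which is slightly more elementary. Both routes must handle the same delicate mixed remainder (for you $O(|\nu-\te|)\,|t-s|^{2\ga}$ from the $\mathbf{x}^{\2}$ term, for the paper $D\mu^{x;lk}_s(\te)\,\mathbf{x}^{\2;kl}_{st}[\nu-\te]$), and your observation that dividing by $|\nu-\te|$ leaves $O(|t-s|^{2\ga})\to 0$ is the same point the paper settles with the inequality $ab\le\frac12(a^2+b^2)$. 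One small imprecision: the $\laa$-remainder in \eqref{dcp:wcdx} lives in $\cac_2^{2\ka+\ga}$, not $\cac_2^{3\ka}$; the conclusion that it is $o(|t-s|)$ is unaffected because Definition \ref{def:fcp} imposes $2\ka+\ga>1$, but you should quote that exponent rather than $3\ka$, since $\ka<\ga$ alone does not give $3\ka>1$.
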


\begin{proof}
We shall prove the expansion in the case $a\leq s<t\leq b$, the proof in the case $a\leq t<s\leq b$ being very similar.
Fix then $a\leq s<t\leq b$ and $\te,\nu\in \R^n$. We write
\begin{equation}\label{taylor2v}
z_t(\nu)-z_s(\te)=z_t(\nu)-z_t(\te)+z_t(\te)-z_s(\te).
\end{equation}
The time increment $z_t(\te)-z_s(\te)$ can be treated thanks to Proposition \ref{intg:wcdx} and we get
\begin{equation}\label{taylorrp1}
z_t(\te)-z_s(\te)=\mu^l_s(\te)\, \der x^l_{st}+\eta_s(\te)\, \der \Id_{st}
+\mu^{x;lk}_s(\te)\,\mathbf{x}^{\mathbf{2};kl}_{st}+\crr_1\lp(s,\te),(t,\nu)\rp,
\end{equation}
where
\begin{equation*}
\crr_1\lp(s,\te),(t,\nu)\rp=
\laa_{st}\lp \rho^l(\te)\, \der x^l+\der \mu^{x;lk}(\te)\,\mathbf{x}^{\mathbf{2};kl}\rp +o(|t-s|),
\end{equation*}
which yields
\begin{equation*}
\lln\crr_1\lp(s,\te),(t,\nu)\rp\rrn\approx o(|t-s|).
\end{equation*}

Going back to \eqref{taylor2v}, the space increment $z_t(\nu)-z_t(\te)$ is now handled with usual Taylor arguments:
\begin{equation}\label{taylorusual}
z_t(\nu)-z_t(\te)=Dz_t(\te)[\nu-\te]+\frac{1}{2}\,D^2z_t(\te+\la (\nu-\te))[\nu-\te]\otimes[\nu-\te],
\end{equation}
where $\la\in ]0,1[$ is a given value. Invoking now relation \eqref{dcp:Dwcdx} we can write (\ref{taylorusual}) in the following form:
\begin{align}\label{taylorrp2}
z_t(\nu)-z_t(\te)&=Dz_s(\te)[\nu-\te]+(\der Dz(\te))_{st}[\nu-\te]+\frac{1}{2}\,D^2z_t(\te+\la (\nu-\te))[\nu-\te]\otimes[\nu-\te]\notag\\
&=Dz_s(\te)[\nu-\te]+D\mu^l_s(\te)\,(\der x^l)_{st}[\nu-\te]+\crr_2\lp(s,\te),(t,\nu)\rp,
\end{align}
where $\crr_2((s,\te),(t,\nu))$ can be decomposed as
\begin{multline*}
D\eta_s(\te)\, \der \Id_{st}[\nu-\te]+D\mu^{x;lk}_s(\te)\,\mathbf{x}^{\mathbf{2};kl}_{st}[\nu-\te]
+\frac{1}{2}\,D^2z_t(\te+\la (\nu-\te))[\nu-\te]\otimes[\nu-\te] \\
+\lcl\laa_{st}\lp D\rho^l(\te)\, \der x^l+\der D\mu^{x;lk}(\te)\,\mathbf{x}^{\mathbf{2};kl}\rp \rcl[\nu-\te] +o(|t-s|)[\nu-\te].
\end{multline*}
We thus easily get the following estimate:
\begin{equation}\label{eq:bnd-R2}
\lln\crr_2\lp(s,\te),(t,\nu)\rp\rrn \approx o\lp|t-s|+|\nu-\te|\rp,
\end{equation}
where we notice that the term $\mathbf{x}^{\mathbf{2};kl}_{st}[\nu-\te]$ is bounded thanks to the elementary inequality $ab\le \frac12(a^{2}+b^{2})$, valid for all $a,b\in\R$.
Let us now plug the decompositions (\ref{taylorrp1}) and (\ref{taylorrp2}) into expression
(\ref{taylor2v}). This finishes the proof of our claim.
\end{proof}

\subsection{Strongly controlled processes}
In order to identify solutions to equations of type~\eqref{main:spde2} we shall include the drift term into the definition of controlled processes, and thus push forward our expansion with respect to the increments of our rough signal $x$. This yields the following definition:
\begin{defn}\label{def:strongly-ctld-process}
Let $a \leq b \leq T$ and let $z$ be a path in $\cac_1^\ka([a,b];V^{m})$ with $\ka\le\ga$ and $2\ka+\ga>1$.
We say that $z$ is a strongly controlled path based on $x$, if $\der z\in\cac_2^\ka([a,b];V^m)$ can be decomposed into
\begin{equation}\label{eq:strg-ctld-dcp}
\der z^i=z^{x;il}\, \der x^l+ z^{xx;ilk}\, \bx^{\2;kl} + \ci\lp z^{t;i} \rp +
z^{\flat;i},
\end{equation}
with $z^{x;il},z^{xx;ilk}\in\cac_1^\ka([a,b];V)$, $z^{t;i}\in\cac_1^{0}([a,b];V)$, $z^{\flat;i} \in \cac_2^{3\ka}([a,b];V)$, for any $1\leq i\leq m$, $1\leq l \leq d$ and where we assume the further relation between $z^{x}$ and $z^{xx}$:
\begin{equation}\label{eq:rel-zx-zxx}
\der z^{x;il} = z^{xx;ilk}\, \der x^{k} +  z^{\sharp;il},
\quad\text{with}\quad
z^{\sharp;il} \in \cac_2^{2\ka}([a,b];V).
\end{equation}
The space of strongly controlled paths on $[a,b]$ with H\"older continuity $\ka$ will be denoted by $\cs_{\bx}^{\ka}([a,b];V^m)$, and a path $z\in\cs_{\bx}^{\ka}([a,b];V^m)$ has to be considered as a vector $(z,z^{x},z^{xx},z^{t})$. The norm on $\cs_{\bx}^{\ka}([a,b];V^m)$ is given by
\begin{equation*}
\cn[z;\cs_{\bx}^{\ka}([a,b];V^m)]
= \|z\|_{\ka} + \|z^{x}\|_{\infty} + \|z^{x}\|_{\ka} + \|z^{xx}\|_{\infty} + \|z^{xx}\|_{\ka}
+ \|z^{t}\|_{\infty}
+ \|z^{\flat}\|_{3\ka}
+ \|z^{\sharp}\|_{2\ka}.
\end{equation*}
\end{defn}

\begin{rem}
The definition of strongly controlled processes in case of a driving noise $x\in\cac^{\ga}$ with $1/3<\ga\le 1/2$ corresponds in fact to the definition of weakly controlled processes corresponding to $x\in\cac^{\ga}$ with $1/4<\ga\le 1/3$. Further information on these structures can be found in \cite{CHLT,Gu10,TT}.
\end{rem}

\smallskip

The space of $V^{m}$-valued strongly controlled processes is obviously a subset of the weakly controlled processes $\cq_{\bx}^{\ka}([a,b];V^m)$. It turns out that this subset has a degenerate structure, in the sense that strongly controlled processes are in fact integrals of weakly controlled processes with respect to $x$. This is the meaning of the following proposition:
\begin{prop}\label{prop:strg-proc-as-integrals}
Let $z$ be a strongly controlled process as in Definition \ref{def:strongly-ctld-process}. Then

\smallskip

\noindent
\emph{(i)} The increment $z^{\flat}$ is uniquely determined by the other components of the strongly controlled process $z$.

\smallskip

\noindent
\emph{(ii)}
for all $s,t\in\ott$ we have
\begin{equation*}
\der z^{i}_{st} = \ist z^{x;il}_{u} \, dx^{l}_{u} + \ist z^{t;i}_{u} \, du,
\end{equation*}
where the integral with respect to $x$ is understood in the sense of Proposition \ref{intg:wcdx}.
\end{prop}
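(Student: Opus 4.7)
The strategy is to apply the coboundary operator $\der$ to the decomposition \eqref{eq:strg-ctld-dcp}, exploit $\der\der z = 0$, and invert using the map $\laa$ of Proposition \ref{prop:Lambda} to obtain an explicit formula for the remainder $z^\flat$. Once this identification is made, (i) is an immediate uniqueness statement and (ii) reduces to recognising the right-hand side of \eqref{dcp:wcdx} in Proposition \ref{intg:wcdx}.

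Concretely, I apply $\der$ to the four terms on the right-hand side of \eqref{eq:strg-ctld-dcp} via Proposition \ref{difrul}\,(ii), which is applicable since each term is a product of a $\cac_1$-increment with a $\cac_2$-increment. The term $z^{x;il}\der x^l$ contributes $-\der z^{x;il}\,\der x^l$ because $\der\der x=0$, and inserting \eqref{eq:rel-zx-zxx} gives $-z^{xx;ilk}\,\der x^k\,\der x^l - z^{\sharp;il}\,\der x^l$. The term $z^{xx;ilk}\bx^{\2;kl}$ contributes $-\der z^{xx;ilk}\,\bx^{\2;kl} + z^{xx;ilk}\,\der x^k\,\der x^l$, where the Chen-type relation $\der\bx^{\2;kl}=\der x^k\,\der x^l$ from Hypothesis \ref{hyp1:x} is used. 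The Lebesgue term $\ci(z^{t;i})$ is additive so $\der\ci(z^{t;i})=0$. The two quadratic-in-$\der x$ contributions cancel, and $\der\der z^i=0$ reduces to
\[
\der z^{\flat;i} \;=\; z^{\sharp;il}\,\der x^l + \der z^{xx;ilk}\,\bx^{\2;kl}.
\]

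Since $z^\sharp\in\cac_2^{2\ka}$, $\der z^{xx}\in\cac_2^\ka$, $\der x\in\cac_2^\ka$ and $\bx^{\2}\in\cac_2^{2\ka}$ with $3\ka>1$, the right-hand side above lies in $\cz\cac_3^{3\ka}\subset\cz\cac_3^{1+}$, while $z^\flat\in\cac_2^{3\ka}\subset\cac_2^{1+}$; Proposition \ref{prop:Lambda} then forces
\[
z^{\flat;i} \;=\; \laa\!\lp z^{\sharp;il}\,\der x^l + \der z^{xx;ilk}\,\bx^{\2;kl} \rp,
\]
which proves (i): given $(z,z^x,z^{xx},z^t)$, the remainder $z^\flat$ is prescribed by $z^x$ and $z^{xx}$ through this formula combined with \eqref{eq:rel-zx-zxx}. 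For (ii), substituting this expression for $z^\flat$ back into \eqref{eq:strg-ctld-dcp} yields
\[
\der z^i \;=\; z^{x;il}\,\der x^l + z^{xx;ilk}\,\bx^{\2;kl} + \ci(z^{t;i}) + \laa\!\lp z^{\sharp;il}\,\der x^l + \der z^{xx;ilk}\,\bx^{\2;kl} \rp,
\]
which matches the defining formula \eqref{dcp:wcdx} of Proposition \ref{intg:wcdx} upon taking $\mu=z^x$ (a weakly controlled path thanks to \eqref{eq:rel-zx-zxx}, with Gubinelli derivative $\mu^x=z^{xx}$ and remainder $\rho=z^\sharp$) and $\eta=z^t$. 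That proposition then identifies the right-hand side with $\ist z^{x;il}_u\,dx^l_u + \ist z^{t;i}_u\,du$, establishing (ii).

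The only real subtlety is the first step: tracking signs and the product convention \eqref{cvpdt} in Proposition \ref{difrul}\,(ii), and verifying that the two $\der x\otimes\der x$ terms — one arising from $\der z^x\,\der x$ via \eqref{eq:rel-zx-zxx}, the other from the Chen relation for $\bx^{\2}$ — cancel exactly. Note that only the Chen relation of Hypothesis \ref{hyp1:x} is invoked; the geometric rule \eqref{eq:geometric-rule} plays no role in this identification.
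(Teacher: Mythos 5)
Your proof is correct and follows essentially the same route as the paper's: both apply $\der$ to the decomposition together with Proposition \ref{difrul} and the Chen relation, use \eqref{eq:rel-zx-zxx} to cancel the two $\der x\,\der x$ terms and arrive at $\der z^{\flat;i}=z^{\sharp;il}\,\der x^{l}+\der z^{xx;ilk}\,\bx^{\2;kl}$, then invert with $\laa$ for (i) and match the resulting expression with \eqref{dcp:wcdx} for (ii). Your explicit identification $\mu=z^{x}$, $\mu^{x}=z^{xx}$, $\rho=z^{\sharp}$, $\eta=z^{t}$ is a slightly more detailed rendering of the paper's closing remark that the two decompositions coincide up to increments of order $3\ka>1$.
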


\begin{proof}
Compute $\der z^{\flat;i}$. According to equation \eqref{eq:strg-ctld-dcp} and Proposition \ref{difrul}, plus the fact that $\der\ci(\eta)=0$, we have
\begin{eqnarray*}
 \der z^{\flat;i}&=&
 \der \lc z^{i}-z^{x;il}\,\der x^{l}- z^{xx;ilk} \,\bx^{\2;lk}  \rc  \\
& =& \der z^{x;il}\,\der x^{l} + \der z^{xx;ilk} \,\bx^{\2;kl} - z^{xx;ilk} \,\der \bx^{\2;kl}.
\end{eqnarray*}
Invoking now relation \eqref{eq:rel-zx-zxx} and Hypothesis \ref{hyp1:x} we obtain
\begin{eqnarray*}
\der z^{\flat;i} &=& \lp z^{xx;ilk}\, \der x^{k} +  z^{\sharp;il} \rp \der x^{l} + \der z^{xx;ilk} \,\bx^{\2;kl}
- z^{xx;ilk} \,\der x^{k} \, \der x^{l}  \\
&=& z^{\sharp;il}  \der x^{l} + \der z^{xx;ilk} \,\bx^{\2;kl}  ,
\end{eqnarray*}
and it is thus easily seen that $\der z^{\flat;i}\in\cac_{3}^{3\ka}$. Owing to Proposition \ref{prop:Lambda}, we can thus write
\begin{equation*}
z^{\flat;i} = \laa\lp   \der z^{xx;ilk} \,\bx^{\2;kl}   +z^{\sharp;il}  \der x^{l} \rp,
\end{equation*}
which yields our first assertion.

\smallskip

In order to prove our claim (ii), it is thus sufficient to observe that the decomposition of $\der z^{i}$ and formula \eqref{dcp:wcdx} coincide up to increments of order $3\ka>1$.

\end{proof}

\smallskip

The following proposition proves that the composition of two function-valued strongly controlled processes is still a strongly controlled process. This fact will be used in order to define transformations of test functions and jets for viscosity solutions to rough PDEs.

\begin{prop}
\label{prop:compo-scp}
Let $x$ be a path satisfying Hypothesis \ref{hyp1:x}, and consider two $\cac^{3}(\R^m;\R^{m})$-valued strongly controlled processes $y, z$ admitting a decomposition of type \eqref{eq:strg-ctld-dcp}. Then $z\circ y$ is another strongly controlled process with decomposition
\begin{equation*}
[z\circ y]^{x;il_1}(\te)= z^{x;i l_1}(y(\te))+\partial_{\te^j}z^{i}(y(\te))\,y^{x;jl_1}(\te),
\end{equation*}
\begin{align*}
[z\circ y]^{xx;il_1l_2}(\te)&=
z^{xx;il_1l_2}(y(\te)) +\partial_{\te^j}z^{i}(y(\te))\,y^{xx;jl_1l_2}(\te)\\
&\quad+\partial_{\te^j}z^{x;il_2}(y(\te))\,y^{x;jl_1}(\te)+ \partial_{\te^j}z^{x;il_1}(y(\te))\,y^{x;jl_2}(\te)\\
&\quad+\partial^2_{\te^{j_1}\te^{j_2}}z^i(y(\te))\,y^{x;j_1l_1}(\te)\,y^{x;j_2l_2}(\te),
\end{align*}
and
\begin{equation*}
[z\circ y]^{t;i}(\te)=z^{t;i}(y(\te))+\partial_{\te^j}z^i(y(\te))\,y^{t;j}(\te).
\end{equation*}
\end{prop}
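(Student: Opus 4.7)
The plan is to start from the identity
\begin{equation*}
\der [z\circ y]^i_{st}(\te)=z^i_t(y_t(\te))-z^i_s(y_s(\te))
\end{equation*}
and to split it as $A_{st}+B_{st}$, where $A_{st}=z^i_t(y_t(\te))-z^i_t(y_s(\te))$ captures the space increment of $y$ at frozen time~$t$, and $B_{st}=z^i_t(y_s(\te))-z^i_s(y_s(\te))$ captures the time increment of $z$ at the frozen spatial point $y_s(\te)$. The term $B$ is handled directly by applying the strongly controlled decomposition \eqref{eq:strg-ctld-dcp} for $z$ pointwise at $y_s(\te)$, producing the contributions
\begin{equation*}
z^{x;il}_s(y_s(\te))\, \der x^l_{st} + z^{xx;ilk}_s(y_s(\te))\, \bx^{\2;kl}_{st}
+ z^{t;i}_s(y_s(\te))\, \der\Id_{st}
\end{equation*}
modulo an element of $\cac_2^{3\ka}$.

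For $A$, I would perform a second-order spatial Taylor expansion of $z_t$ around $y_s(\te)$:
\begin{equation*}
A=\partial_{\te^j}z^i_t(y_s(\te))\,\der y^j_{st}(\te)
+\tfrac{1}{2}\partial^2_{\te^{j_1}\te^{j_2}}z^i_t(y_s(\te))\,\der y^{j_1}_{st}(\te)\,\der y^{j_2}_{st}(\te)+R^A_{st},
\end{equation*}
with $R^A$ a genuine third-order space remainder (controlled thanks to the $\cac^{3}$ regularity of $z$). Next I would replace the time-$t$ derivatives of $z$ by their time-$s$ counterparts: by Remark \ref{rmk:diff-controlled-proc}, both $\partial_{\te^j} z^i$ and $\partial^2_{\te^{j_1}\te^{j_2}} z^i$ are themselves strongly controlled processes, so that $\partial_{\te^j} z^i_t(y_s(\te))=\partial_{\te^j} z^i_s(y_s(\te))+\partial_{\te^j} z^{x;il}_s(y_s(\te))\,\der x^l_{st}$ up to a $\cac_2^{2\ka}$ term, and similarly $\partial^2 z^i_t=\partial^2 z^i_s$ modulo a $\cac_2^{\ka}$ term. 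Substituting the strongly controlled decomposition of $y$ into $\der y^j_{st}$ and into the product $\der y^{j_1}_{st}\der y^{j_2}_{st}$, keeping only the contributions of order $\le 2\ka$ in $|t-s|$, and using the geometric rule \eqref{eq:geometric-rule} to rewrite
\begin{equation*}
\der x^{l_1}_{st}\der x^{l_2}_{st}=\bx^{\2;l_1l_2}_{st}+\bx^{\2;l_2l_1}_{st}
\end{equation*}
gives the three relevant blocks: terms proportional to $\der x^l$, to $\bx^{\2;kl}$, and to $\der\Id$. Symmetrizing via $\partial^2_{\te^{j_1}\te^{j_2}}=\partial^2_{\te^{j_2}\te^{j_1}}$ then collapses the quadratic contributions into the stated expression for $[z\circ y]^{xx;il_1l_2}$, while the $\der x^l$--block and the drift block yield the formulas for $[z\circ y]^{x;il_1}$ and $[z\circ y]^{t;i}$ respectively.

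It remains to check that $z\circ y$ so decomposed fits Definition~\ref{def:strongly-ctld-process}. The new coefficients inherit the required H\"older regularity from those of $y,z$ and from the $\cac^{3}$ assumption on $z$, while the collected remainder $[z\circ y]^{\flat}$ lies in $\cac_{2}^{3\ka}$ by construction (the only borderline terms are the third-order Taylor remainder and the $\laa$-corrections, both handled as in Proposition~\ref{prop:Lambda}). Finally, the compatibility relation \eqref{eq:rel-zx-zxx} between $[z\circ y]^{x}$ and $[z\circ y]^{xx}$ follows by computing $\der[z\circ y]^{x;il_1}$ from its explicit expression, using \eqref{eq:rel-zx-zxx} for both $z$ and $y$ together with the Leibniz rule of Proposition~\ref{difrul}, and matching the resulting $\der x^{l_2}$-coefficient with $[z\circ y]^{xx;il_1l_2}$. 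The main obstacle is purely combinatorial: controlling the proliferation of cross terms generated in step (2)--(3) and keeping careful track of which residuals are absorbed in $[z\circ y]^{\flat}$ versus $[z\circ y]^{\sharp}$; once the bookkeeping is done, no further analytic ingredient beyond the geometric rule \eqref{eq:geometric-rule} and the $\laa$-map of Proposition~\ref{prop:Lambda} is needed.
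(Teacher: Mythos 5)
Your decomposition into $A_{st}+B_{st}$ is exactly the paper's splitting into $S_{st}+T_{st}$, and the subsequent steps (second-order spatial Taylor expansion, replacement of time-$t$ derivatives by time-$s$ ones via the controlled structure of $\partial_{\te^j}z$, substitution of the decomposition of $\der y$, and the geometric rule to convert $\der x^{l_1}_{st}\der x^{l_2}_{st}$ into second-level increments) coincide with the paper's argument. The proposal is correct and follows essentially the same route as the published proof.
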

\begin{proof}
Write
\begin{equation*}
z_t(y_t(\te)) - z_s(y_s(\te))
= \lc z_t(y_t(\te)) - z_t(y_s(\te)) \rc + \lc z_t(y_s(\te)) - z_s(y_s(\te)) \rc
\equiv S_{st}(\te) + T_{st}(\te),
\end{equation*}
and deal with the two terms $S_{st}(\te)$ and $T_{st}(\te)$ separately. Note that in the remainder of the proof we shall denote by $r^\flat$ any remainder of order $\cac^{1+}$, independently of its particular value.

\smallskip

In order to handle $S_{st}(\te)$, we invoke the fact that $z$ can be differentiated in the space variable plus Remark \ref{rmk:diff-controlled-proc}, which yield:
\begin{equation}\label{eq:gen-dcp-Sst-1}
S_{st}^{i}(\te)
= \partial_{\te^j} z_t^{i}(y_s(\te)) \, \der y_{st}^{j}(\te)
+\frac12 \partial_{\te^{j_1}\te^{j_2}}^{2} z_t^{i}(y_s(\te)) \, \der y_{st}^{j_1}(\te) \, \der y_{st}^{j_2}(\te)
+r_{st}^{\flat}.
\end{equation}
We now plug the decomposition
\begin{equation*}
\der y^j_{st}(\te) = y_s^{x;jl_1}(\te) \, \der x_{st}^{l_1}
+ y_s^{xx;j l_1 l_2}(\te) \, \bx_{st}^{\2;l_2l_1}+ \ci_{st}\lp  y^{t;j}(\te)\rp
+ r_{st}^{\flat}(\te)
\end{equation*}
into \eqref{eq:gen-dcp-Sst-1}, replace $\partial_{\te^j} z_t^{i}(y_s(\te))$ by $\partial_{\te^j} z_s^{i}(y_s(\te))$ and $\partial_{\te^j\te^k}^{2} z_t^{i}(y_s(\te))$ by $\partial_{\te^j\te^k}^{2}z_s^{i}(y_s(\te))$ and only keep track of terms whose H\"older regularity is $<1$, which gives
\begin{multline*}
S_{st}^{i}(\te)= \partial_{\te^j} z_s^{i}(y_s(\te)) \, y^{x;jl_1}(\te) \, \der x_{st}^{l_1}
+\frac12 \partial_{\te^{j_1}\te^{j_2}}^{2}  z_{s}^{i}(y_s(\te)) \, y^{x;j_1l_1}(\te)y^{x;j_2l_2}(\te)
\, \der x_{st}^{l_1}  \, \der x_{st}^{l_2} \\
+ \partial_{\te^j}\, \der z_{st}^{i}(y_s(\te)) \, y^{x;jl_1}(\te) \, \der x_{st}^{l_1}
+ \partial_{\te^j} z_s^{i}(y_s(\te)) \, y_s ^{xx;j l_1 l_2}(\te)\, \bx_{st}^{\2;l_2l_1}
+\ci_{st}\lp \partial_{\te^j} z^{i}(y(\te)) \,y^{t;j}(\te) \rp
+ r_{st}^{\flat}.
\end{multline*}
Next resort to the decomposition of $\der \partial_{\te^j} z$ given by \eqref{eq:strg-ctld-dcp} and also invoke the geometric assumption \eqref{eq:geometric-rule} for the term $\der x_{st}^{l_1}  \, \der x_{st}^{l_2}$. As the reader might easily check, this yields $S^{x;il_1}(\te)=  \partial_{\te^j}z^{i}(y(\te))\,y^{x;jl_1}(\te)$, $S^{t;i}(\te)= \partial_{\te^j}z^i(y(\te))\,y^{t;j}(\te)$ and
\begin{multline*}
S^{xx;il_1l_2}(\te)=
\partial_{\te^j}z^{i}(y(\te))\,y^{xx;jl_1l_2}(\te)+\partial_{\te^j}z^{x;il_2}(y(\te))\,y^{x;jl_1}(\te)+ \partial_{\te^j}z^{x;il_1}(y(\te))\,y^{x;jl_2}(\te)\\
+\partial^2_{\te^{j_1}\te^{j_2}}z^i(y(\te))\,y^{x;j_1l_1}(\te)\,y^{x;j_2l_2}(\te).
\end{multline*}
The term $T_{st}(\te)$ gives the remaining contributions of our claim, by just composing relation~\eqref{eq:strg-ctld-dcp} with $y$. It is also easy to see that relation ~\eqref{eq:rel-zx-zxx} between $[z\circ y]^{x}$ and $[z\circ y]^{xx}$ holds.
\end{proof}

The previous proposition yields a space-time expansion for the composition $z\circ y$ of strongly controlled processes, which will be used in the definition of jets.

\begin{cor}
\label{cor:compo-scp}
Assume the same hypotheses as in Proposition \ref{prop:compo-scp}. Then
\begin{align*}
[z\circ y]^i_t(\nu)-[z\circ y]^i_s(\te)&=[z\circ y]^{t;i}(\te)\, \delta\Id_{st}+ [z\circ y]^{x;il_1}(\te)\, \der x^l_{st}+[z\circ y]^{xx;il_1l_2}(\te)\,\mathbf{x}^{\mathbf{2};kl}_{st}\\
&\quad+D[z\circ y]^i_s(\te)\lc\nu-\te\rc+D[z\circ y]^{x;il_1}(\te) \, \der x^l_{st} \lc\nu-\te\rc\\
&\quad+o\lp|t-s|+|\nu-\te|\rp,
\end{align*}
for all $1\leq i\leq n$, $s,t\in[a,b]$ and any $\te,\nu\in \R^n$.
\end{cor}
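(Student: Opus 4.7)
The strategy is to reduce the statement to an application of Corollary \ref{taylor:wcdx}, using the two previous results to equip $z \circ y$ with the right structure. I would organize the argument into three steps, following the pattern of the proof of Corollary \ref{taylor:wcdx}.

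\smallskip

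\noindent\textbf{Step 1: identify $z\circ y$ as a strongly controlled process.}
By Proposition \ref{prop:compo-scp}, the composition $z\circ y$ is a strongly controlled process with components $[z\circ y]^{x}$, $[z\circ y]^{xx}$, $[z\circ y]^{t}$ given explicitly there. By Proposition \ref{prop:strg-proc-as-integrals}\,(ii), the time increment can then be represented as
\begin{equation*}
[z\circ y]^i_{t}(\te) - [z\circ y]^i_{s}(\te) =
\int_{s}^{t} [z\circ y]^{x;il}_{u}(\te) \, dx^{l}_{u} + \int_{s}^{t} [z\circ y]^{t;i}_{u}(\te) \, du,
\end{equation*}
where the integral is in the sense of Proposition \ref{intg:wcdx}. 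In particular, expanding this integral through Proposition \ref{intg:wcdx} (item (iii)) yields
\begin{equation*}
[z\circ y]^i_{t}(\te) - [z\circ y]^i_{s}(\te)
= [z\circ y]^{x;il}(\te)\,\der x^{l}_{st}
+ [z\circ y]^{xx;il_1l_2}(\te)\,\bx^{\2;l_2l_1}_{st}
+ [z\circ y]^{t;i}(\te)\,\der\Id_{st} + \crr_{1},
\end{equation*}
with $|\crr_{1}| = o(|t-s|)$, exactly as in \eqref{taylorrp1}.

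\smallskip

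\noindent\textbf{Step 2: treat the pure spatial increment.}
For the space displacement $[z\circ y]_{t}(\nu) - [z\circ y]_{t}(\te)$, I would use the usual second-order Taylor formula
\begin{equation*}
[z\circ y]^i_{t}(\nu) - [z\circ y]^i_{t}(\te)
= D[z\circ y]^i_{t}(\te)[\nu-\te]
+ \tfrac{1}{2} D^{2}[z\circ y]^i_{t}(\te + \la(\nu-\te))[\nu-\te]\otimes[\nu-\te],
\end{equation*}
and then rewrite $D[z\circ y]^i_{t}(\te) = D[z\circ y]^i_{s}(\te) + \der D[z\circ y]^i_{st}(\te)$. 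Since $z,y \in \cac^{3}$, Remark \ref{rmk:diff-controlled-proc} applies to $z\circ y$ and gives the controlled decomposition of $\der D[z\circ y]$; keeping only the dominant $\der x^{l}$--term and absorbing everything else in a remainder reproduces the analogue of \eqref{taylorrp2}.

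\smallskip

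\noindent\textbf{Step 3: combine and bound the remainder.}
Adding the two expansions in the splitting
$$[z\circ y]_{t}(\nu) - [z\circ y]_{s}(\te)
= \bigl([z\circ y]_{t}(\nu) - [z\circ y]_{t}(\te)\bigr)
+ \bigl([z\circ y]_{t}(\te) - [z\circ y]_{s}(\te)\bigr)$$
produces exactly the terms in the statement. It remains to control the cross terms such as $D[z\circ y]^{xx;il_1l_2}(\te)\,\bx^{\2;l_2l_1}_{st}[\nu-\te]$ and the second-order spatial term, each of which is handled by $ab \le \tfrac{1}{2}(a^{2}+b^{2})$ as in \eqref{eq:bnd-R2}, yielding the desired $o(|t-s|+|\nu-\te|)$ bound.

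\smallskip

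The main obstacle is the bookkeeping of Step 2: one has to ensure that the spatial derivatives of the components $[z\circ y]^{x}$, $[z\circ y]^{xx}$, $[z\circ y]^{t}$ are themselves regular enough controlled objects so that Remark \ref{rmk:diff-controlled-proc} can be invoked, which is why the hypothesis $z,y \in \cac^{3}$ is used (one spatial derivative above the $\cac^{2}$ requirement of Corollary \ref{taylor:wcdx}). Once this is in place, the proof is an almost verbatim repetition of the argument of Corollary \ref{taylor:wcdx} with $\eta$, $\mu$, $\mu^{x}$ replaced by $[z\circ y]^{t}$, $[z\circ y]^{x}$, $[z\circ y]^{xx}$.
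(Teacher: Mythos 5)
Your proposal is correct and follows exactly the paper's route: the paper's proof is the one-line observation that the claim is a direct application of Proposition \ref{prop:compo-scp} (which gives the strongly controlled structure of $z\circ y$) combined with Corollary \ref{taylor:wcdx} (via Proposition \ref{prop:strg-proc-as-integrals}, which writes a strongly controlled process as an integral of the form handled there). Your three steps simply spell out the details of that combination.
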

\begin{proof}
It is a direct application of Proposition \ref{prop:compo-scp} and Corollary \ref{taylor:wcdx}.
\end{proof}

\subsection{Preliminary results on rough flows}\label{sec:prelim-stoch-flows}

The existence of the rough flow $\phi$ generated by a family of vector fields ${A}_{1},\ldots,{A}_{d}$ is a well-known result, which can be stated as follows:

\begin{prop}\label{prop:dcp-flow}
Let $x$ be a path satisfying Hypothesis \ref{hyp1:x}. For $\eta\in\R^{m}$ and a family of smooth and bounded vector fields ${A}_{1},\ldots,{A}_{d}$, consider the unique solution $\phi_t(\te)$ to the following rough equation:
\begin{equation}\label{eq:rde-flow}
\phi_t(\eta) = \eta + \sum_{l=1}^{d} \int_0^t{A}_{l}(\phi_s(\eta)) \, dx_{s}^{l}.
\end{equation}
Then:

\smallskip

\noindent
\emph{(i)} For any $t\in\R_+$, $\phi_t$ is a $\cac^3$-diffeomorphism.

\smallskip

\noindent
\emph{(ii)} $\phi(\eta)$ is well defined as a strongly controlled process in $\cs_\bx^\ga(\R^m)$, with decomposition:
\begin{equation*}
\phi^{x;il_1} = {A}^i_{l_1} \circ \phi, \qquad  \phi^{xx;il_1l_2} = [{A}_{l_2} {A}^i_{l_1}]\circ\phi,
\qquad \phi^{t;i}=0,
\end{equation*}
where we have used the interpretation of our vector fields $A_l$ as derivative operators. Namely, for a smooth function $g$ defined on $\R^{m}$ we set $A_{l}g=A_{l}^{k}\partial_{\te^k}g$ and in particular ${A}_{l_2} {A}^i_{l_1}(\te)=A_{l_2}^{k}(\te)\partial_{\te^k}{A}^i_{l_1}(\te)$.
\end{prop}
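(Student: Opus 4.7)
The plan is to separate the two claims: the diffeomorphism property of $\phi_t$ is part of the classical theory of rough flows and will be invoked, while the strongly controlled decomposition (ii) requires identifying the three tensors $\phi^{x}$, $\phi^{xx}$, $\phi^{t}$ from the right-hand side of \eqref{eq:rde-flow} by reading off Proposition \ref{intg:wcdx}.

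For claim (i), I would appeal to the standard existence and regularity theory for rough differential equations driven by a $\ga$-H\"older geometric rough path with $\ga>1/3$ and $\cac^{3}_{b}$-vector fields (e.g.\ the results quoted in the references of Section~\ref{sec:algebraic-intg}). This gives that $\phi_{t}(\cdot)\in\cac^{3}(\R^{m})$ uniformly on compact time intervals, and that $\phi_{t}$ is a $\cac^{3}$-diffeomorphism whose inverse also solves a rough equation with vector fields obtained by pulling back the $A_{l}$'s through $\phi_{t}^{-1}$. No new computation is needed here.

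For claim (ii), the strategy is a two-step bootstrap. First, a Picard-type argument yields that $\phi\in\cq_{\bx}^{\ga}(\R^{m})$ as a weakly controlled process, with $\phi^{x;il}=A_{l}^{i}(\phi)$: indeed $\der\phi=A_{l}(\phi)\,\der x^{l}+r^{\flat}$ with $r^{\flat}\in\cac_{2}^{2\ga}$, and Proposition \ref{cp:weak-phi} applied with $G=A_{l}$ guarantees that $A_{l}(\phi)$ is itself weakly controlled with derivative $\partial_{\te^{j}}A_{l}^{i}(\phi)\,\phi^{x;jk}=\partial_{\te^{j}}A_{l}^{i}(\phi)\,A_{k}^{j}(\phi)=[A_{k}A_{l}^{i}]\circ\phi$. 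Second, I plug this weakly controlled decomposition of $\mu^{l}\equiv A_{l}(\phi)$ (with $\eta\equiv 0$) into the integration formula \eqref{dcp:wcdx} of Proposition \ref{intg:wcdx}. This gives
\begin{equation*}
\der\phi^{i}_{st}=A_{l}^{i}(\phi_{s})\,\der x^{l}_{st}+[A_{k}A_{l}^{i}](\phi_{s})\,\bx^{\2;kl}_{st}+\laa_{st}\!\lp\rho^{l}\der x^{l}+\der\mu^{x;lk}\bx^{\2;kl}\rp,
\end{equation*}
and the last summand is an element of $\cac_{2}^{3\ga}$ thanks to the continuity estimate \eqref{ineqla} for $\Lambda$. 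Identification with \eqref{eq:strg-ctld-dcp} immediately yields $\phi^{x;il}=A_{l}^{i}\circ\phi$, $\phi^{xx;ilk}=[A_{k}A_{l}^{i}]\circ\phi$, $\phi^{t;i}=0$, and $\phi^{\flat;i}=\Lambda(\rho^{l}\der x^{l}+\der\mu^{x;lk}\bx^{\2;kl})\in\cac_{2}^{3\ga}$.

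It remains to check the consistency relation \eqref{eq:rel-zx-zxx}, i.e.\ that $\der\phi^{x;il}=\phi^{xx;ilk}\der x^{k}+\phi^{\sharp;il}$ with $\phi^{\sharp}\in\cac_{2}^{2\ga}$. Since $\phi^{x;il}=A_{l}^{i}(\phi)$, one more application of Proposition \ref{cp:weak-phi} to $G=A_{l}^{i}$ gives $\der A_{l}^{i}(\phi)=\partial_{\te^{j}}A_{l}^{i}(\phi)\,\phi^{x;jk}\,\der x^{k}+r^{\flat}=[A_{k}A_{l}^{i}](\phi)\,\der x^{k}+r^{\flat}$ with $r^{\flat}\in\cac_{2}^{2\ga}$, which is exactly the required decomposition with $\phi^{\sharp;il}=r^{\flat}$. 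The main technical point, and the only place where some care is needed, is to verify that each of the increments produced by Propositions \ref{cp:weak-phi} and \ref{intg:wcdx} indeed sits in the H\"older class claimed; this follows directly from the smoothness and boundedness of the $A_{l}$'s and the $\ga$-H\"older regularity of $\phi$, propagated through the chain rule estimate \eqref{bnd:phi} and the integration estimate \eqref{bnd:norm-iwcdx}.
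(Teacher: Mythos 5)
Your argument is correct. Note that the paper itself offers no proof of this proposition: it is introduced as ``a well-known result'' on rough flows and simply stated, so there is nothing to compare against line by line. Your two-step bootstrap --- first extracting the weakly controlled structure $\delta\phi^{i}=A_{l}^{i}(\phi)\,\delta x^{l}+r^{\flat}$ from the fixed-point construction, then applying Proposition \ref{cp:weak-phi} to $G=A_{l}^{i}$ to identify $\mu^{x;lk}=[A_{k}A_{l}^{i}]\circ\phi$ and feeding this into the integration formula \eqref{dcp:wcdx} of Proposition \ref{intg:wcdx} --- is exactly the natural way to verify the strongly controlled decomposition inside the paper's framework, and your index bookkeeping ($\phi^{xx;il_1l_2}=[A_{l_2}A_{l_1}^{i}]\circ\phi$ matching the $\mu^{x;lk}\,\mathbf{x}^{\mathbf{2};kl}$ pairing) is consistent with the statement. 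The verification of the compatibility relation \eqref{eq:rel-zx-zxx} and of the H\"older classes of the remainders (using $3\ga>1$ so that $\Lambda$ applies) is also complete, so your proposal supplies the details the paper leaves to the literature.
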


We now give the equivalent of Proposition \ref{prop:dcp-flow} for the inverse of the flow $\phi$.

\begin{prop}\label{prop:dcp-invflow}
Let $x$ be a path satisfying Hypothesis \ref{hyp1:x}. For $\eta\in\R^{m}$ and the family of vector fields ${A}_{1},\ldots,{A}_{d}$ of Proposition \ref{prop:dcp-flow}, consider the unique solution $\zeta_t(\eta)$ to the following rough equation:
\begin{equation}\label{eq:rde-invflow}
{\zeta}_t(\eta)=\eta-\sum_{l_1=1}^d \int_0^t {A}_{l_1}{\zeta}_s(\eta)\,dx_s^{l_1},
\end{equation}
where ${A}_{l_1}{\zeta}_s$ stands for ${A}_{l_1}^{i} \partial_{\te_{i}}\zeta_s$.
Then:

\smallskip

\noindent
\emph{(i)} ${\zeta}(\eta)$ is well defined as a strongly controlled process in $\cs_\bx^\ga(\R^m)$, with decomposition:
\begin{equation*}
{\zeta}^{x;il_1}= -{A}_{l_1}{\zeta}^i, \qquad  {\zeta}^{xx;il_1l_2} = {A}_{l_1} {A}_{l_2}{\zeta}^i,
\qquad \zeta^{t;i}=0,
\end{equation*}
where we have used the interpretation of our vector fields $A_l$ as derivative operators and
\begin{equation*}
{A}_{l_1} {A}_{l_2} \, g
={A}_{l_1}^{j_1}\,\partial_{\te^{j_1}}{A}^{j_2}_{l_2}\,\partial_{\te^{j_2}} g
+ {A}_{l_1}^{j_1} {A}_{l_2}^{j_2} \, \partial^2_{\te^{j_1}\te^{j_2}} g.
\end{equation*}

\smallskip

\noindent
\emph{(ii)} For all $t\in[0,T]$ we have $\zeta_{t}=\phi^{-1}_{t}$, where  $\phi$ is the solution to \eqref{eq:rde-flow}.
\end{prop}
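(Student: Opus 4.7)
The plan is to split the proof in two. First, I solve the rough transport equation~\eqref{eq:rde-invflow} by a Picard iteration in $\cs_{\bx}^{\ga}(\R^m)$ and read off the decomposition of $\zeta$ from the integrand $-A_{l_1}\zeta$; second, I identify this solution with $\phi^{-1}$ by showing, via the composition rule of Proposition~\ref{prop:compo-scp}, that $\zeta_t\circ\phi_t$ is a strongly controlled process with vanishing rough components.

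For part \emph{(i)}, I work in the space of $\cac^{3}(\R^m;\R^m)$-valued strongly controlled paths and use Proposition~\ref{intg:wcdx} to give meaning to the right hand side of \eqref{eq:rde-invflow}. The integrand $\mu^{l_1} := -A_{l_1}\zeta$ is itself a weakly controlled path: differentiating the Ansatz $\der\zeta^i=-A_{l_2}\zeta^i\,\der x^{l_2}+\cdots$ in the space variable via Remark~\ref{rmk:diff-controlled-proc} and then applying $A^{j}_{l_1}\partial_{\te^j}$, one obtains
\begin{equation*}
\der \mu^{l_1;i}= -A^{j}_{l_1}\partial_{\te^j}\lp -A_{l_2}\zeta^i \rp\,\der x^{l_2}+ r^{\flat} = A_{l_1}A_{l_2}\zeta^i\,\der x^{l_2}+r^{\flat},
\end{equation*}
the second identity being precisely the expansion of $A_{l_1}A_{l_2}$ stated in the proposition. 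Plugging this into Proposition~\ref{intg:wcdx} produces a strongly controlled output with $\zeta^{x;il_1}=-A_{l_1}\zeta^i$, $\zeta^{xx;il_1l_2}=A_{l_1}A_{l_2}\zeta^i$ and $\zeta^{t;i}=0$. A contraction argument on a small time interval, followed by the usual patching up to $T$, yields a unique strongly controlled solution, exactly parallel to the proof of Proposition~\ref{prop:dcp-flow}.

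For part \emph{(ii)}, I compose $\zeta$ with $\phi$ via Proposition~\ref{prop:compo-scp}. The first-order coefficient cancels at once:
\begin{equation*}
[\zeta\circ\phi]^{x;il_1}(\te)=-A_{l_1}\zeta^i(\phi(\te))+\partial_{\te^j}\zeta^i(\phi(\te))\,A^j_{l_1}(\phi(\te))=0,
\end{equation*}
and the drift vanishes because both $\zeta^{t}$ and $\phi^{t}$ do. For the $xx$-coefficient, I insert the explicit decompositions of $\zeta$ from part (i) and of $\phi$ from Proposition~\ref{prop:dcp-flow} into the five contributions of the composition formula; after expanding each occurrence of $A_{l_1}A_{l_2}$ into its first- and second-derivative constituents and relabelling dummy indices, all terms cancel pairwise and one gets $[\zeta\circ\phi]^{xx;il_1l_2}\equiv 0$. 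Proposition~\ref{prop:strg-proc-as-integrals}(ii) then gives
\begin{equation*}
\der(\zeta\circ\phi)_{st}=\ist[\zeta\circ\phi]^{x;il_1}_u\,dx^{l_1}_u+\ist[\zeta\circ\phi]^{t;i}_u\,du=0,
\end{equation*}
and combined with $\zeta_0\circ\phi_0=\id$ this yields $\zeta_t=\phi_t^{-1}$ for every $t\in[0,T]$.

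The main obstacle is the algebraic cancellation $[\zeta\circ\phi]^{xx;il_1l_2}\equiv 0$: five a priori distinct contributions coming from Proposition~\ref{prop:compo-scp} must combine, through the precise form of the Lie-type operator $A_{l_1}A_{l_2}$ displayed in the statement, into the zero tensor, independently of $l_1$ and $l_2$ (and in particular without invoking the symmetry of $\bx^{\2,s}$). Once this identity is verified by a direct index computation, the rest of the argument is a routine application of the machinery developed in the preceding section.
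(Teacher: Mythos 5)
Your verification of the coefficient identities is correct: the computation of $\mu^{x}$ for the integrand $-A_{l_1}\zeta$ in part (i), and the pairwise cancellation of the five terms in $[\zeta\circ\phi]^{xx;il_1l_2}$ in part (ii), both check out, and the argument for (ii) via Proposition~\ref{prop:compo-scp} and Proposition~\ref{prop:strg-proc-as-integrals}(ii) is a clean, self-contained route (the paper itself merely delegates (i) to an ``easy exercise'' and (ii) to Kunita, so your (ii) is genuinely more explicit than what is in the text).

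The gap is in your existence argument for part (i). Equation \eqref{eq:rde-invflow} is not a rough ODE like \eqref{eq:rde-flow}: the unknown enters through the first-order operator $A_{l_1}^{i}\partial_{\te_i}$, so each Picard step \emph{loses spatial derivatives}. Concretely, to control the $2\ka$-remainder of the controlled decomposition of $-A_{l_1}\zeta$ you need the decomposition of $D\zeta$, and to control $\zeta^{\flat}$ at order $3\ka$ you need $D^{2}\zeta$; hence the map $\zeta\mapsto\eta-\int A_{l_1}\zeta\,dx^{l_1}$ sends $\cac^{k}$-valued controlled paths into $\cac^{k-2}$-valued ones, and it is not a self-map of the $\cac^{3}$-valued space on which you propose to contract. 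The norms never close and the contraction estimate ``exactly parallel to Proposition~\ref{prop:dcp-flow}'' is not available; this is the classical loss-of-derivatives obstruction for transport equations. The standard repair --- and what the citation of \cite{Ku-Flour} implicitly encodes --- is to reverse your logic: \emph{define} $\zeta_t:=\phi_t^{-1}$ (well defined since $\phi_t$ is a $\cac^3$-diffeomorphism by Proposition~\ref{prop:dcp-flow}(i)), use the identity $\zeta\circ\phi=\id$ together with the composition formula of Proposition~\ref{prop:compo-scp} to \emph{solve} for $\zeta^{x}$, $\zeta^{xx}$, $\zeta^{t}$ (which yields exactly the stated coefficients, by the same index computation you perform), and then conclude from Proposition~\ref{prop:strg-proc-as-integrals}(ii) that this $\zeta$ satisfies \eqref{eq:rde-invflow}. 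Uniqueness of the solution in the controlled class then follows by running your part-(ii) cancellation for an arbitrary solution. With that reordering your computations carry the whole proof; as written, the Picard step does not.
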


\begin{proof}
Item (i) can be thought of as an easy exercise, while item (ii) can be proven exactly as in \cite{Ku-Flour}.
\end{proof}

\section{Viscosity solutions to rough PDEs}\label{sec:viscosity-solutions}
We now go back to equation \eqref{main:spde1} in quite a general context. However, for sake of clarity, we shall restrict our analysis to the case of a dependence of $F$ and $\si$ on $u$ and $Du$ only. Namely, we consider a general equation of the form:
\begin{equation}\label{eq:spde-integral}
u_t(\te)=\alpha(\te)+\int_0^t F\lp u_r(\te),Du_r(\te)\rp dr
+\sum_{l=1}^d\int_0^t\si^l\lp u_r(\te), Du_r(\te)\rp dx_r^l,
\end{equation}
In this section we shall clarify what we mean by strong and viscosity solution to equation~\eqref{eq:spde-integral}.

\subsection{Strong solutions to rough PDEs}
There are several ways to define solutions to rough PDEs driven by a finite dimensional noise (see \cite{CFO,GT} among others). For our purposes and with our previous notations in mind, the most natural way to do so is the following:
\begin{defn}\label{def:clasol}
Let $u$ be a weakly controlled process in $\cq_\bx^\ka([0,T];V)$ with $V=\cac^2(\R^n)$ and $\ka>1/3$, and consider a function $\si$ in $\cac^{2}(\R\times \R^{n})$.
We say that $u$ is a strong solution to the rough PDE \eqref{eq:spde-integral} if $u$ is in fact a strongly controlled process in $\cs_\bx^\ka([0,T];V)$, such that
\begin{equation*}
u^{x;l} = \si^l(u,Du), \quad u^{t}= F(u,Du)
\end{equation*}
and
\begin{equation}\label{eq:dcp-uxx-strong-solt}
u^{xx;l_1l_2} = \partial_u\si^{l_1}(u,Du)\,\si^{l_2}(u,Du)
+ \partial_{p^j}\si^{l_1}(u,Du)\,\partial_{\te^j} [ \si^{l_2}(u,Du)],
\end{equation}
where in the last expression, the notation $\partial_{\te^j} [ \si^k(u,Du)]$ is a compact expression for the quantity
\begin{equation}\label{eq:expression-Dtheta-sigma-u}
\partial_{\te^j} [ \si^{l_2}(u,Du)]
=
\partial_u\si^{l_2}(u,Du)\,\partial_{\te^j} u +\partial_{p^i}\si^{l_2}(u,Du)\,\partial_{\te^i\te^j}u.
\end{equation}
\end{defn}

\begin{proof}[Heuristics for Definition \ref{def:clasol}]
Let us denote by $\rho^{\beta}$ any generic remainder with H\"older regularity $\beta$. Then the weakly controlled structure of the candidate solution $u$ is given by:
\begin{equation}\label{eq:u-weak-ctrled}
\der u_{st} = \si^{l_1}(u_s,Du_s) \, \der x^{l_1}_{st} + \rho_{st}^{2\ga}
\equiv u^{x;l_1}_{s} \, \der x^{l_1}_{st} + \rho_{st}^{2\ga}.
\end{equation}
We now expand further the $2\ga$-H\"older term in $\rho_{st}^{2\ga}$: it is given by
\begin{equation*}
\int_{s}^{t} K_{sr}^{l_1} \, dx_{r}^{l_1},
\quad\mbox{with}\quad
K_{sr}^{l_1} = \si^{l_1}(u_r,Du_r) - \si^{l_1}(u_s,Du_s).
\end{equation*}
Furthermore, equation \eqref{eq:u-weak-ctrled} yields
\begin{equation*}
\der u_{st}= \si^{l_2}(u_s,Du_s) \, \der x^{l_2}_{st} + \rho_{st}^{2\ga},
\quad\mbox{and}\quad
\der D^{l_3}u_{st}= \partial_{\te_{l_3}}\si^{l_2}(u_s,Du_s) \, \der x^{l_2}_{st} + \rho_{st}^{2\ga}.
\end{equation*}
Plugging these relations into the definition of $K$ we get
\begin{equation*}
K_{sr}^{l_1} =
\lc \partial_{u} \si^{l_1}(u_s,Du_s) \, \si^{l_2}(u_s,Du_s)
+ \partial_{p_{l_3}} \si^{l_1}(u_s,Du_s) \, \partial_{\te_{l_3}}\si^{l_2}(u_s,Du_s) \rc \der x^{l_2}_{st} + \rho_{st}^{2\ga},
\end{equation*}
and thus $\int_{s}^{t} K_{sr}^{l_1} \, dx_{r}^{l_1}= u^{xx;l_1l_2} \bx^{\2;l_2l_1}+\rho_{st}^{3\ga}$, where
\begin{equation*}
u^{xx;l_1l_2}_{s} = \partial_{u} \si^{l_1}(u_s,Du_s) \, \si^{l_2}(u_s,Du_s)
+ \partial_{p_{l_3}} \si^{l_1}(u_s,Du_s) \, \partial_{\te_{l_3}}\si^{l_2}(u_s,Du_s),
\end{equation*}
which is our decomposition \eqref{eq:dcp-uxx-strong-solt}.

\end{proof}

\begin{rem}
Note that even if we consider differential equations of the first order in $u$
  it is clear from the rough formulation (and specifically equation \eqref{eq:expression-Dtheta-sigma-u}) that in general, the equation is
  of second order. This has links with the well-known phenomenon of
  super-parabolicity of SPDEs.
\end{rem}

We will use the a priori structure of strong solutions to guess the natural form of test functions for the viscosity formulation of our rough PDE. To this aim we first get a space-time expansion for strong solutions to equation \eqref{eq:spde-integral}:

\begin{prop}
\label{class-decom}
Suppose that
$$
F\in C^{3}\lp\R\times\R^n;\R\rp \text{ and } \si\in C^{3}\lp\R\times \R^n;\R^d\rp.
$$
Let $u$ be a strong solution to \eqref{eq:spde-integral}. Then, for any $0\leq s,t\leq T$ and any $\te,\nu\in \R^n$, it holds that
\begin{multline}\label{tx:taylor}
u_t(\nu)=u_s(\te)+a_s(\te)\, \der\Id_{st} + b_{s}^l(\te)\, \der x^l_{st}
+c_{s}^{lk}(\te)\,\mathbf{x}^{\mathbf{2},kl}_{st} \\
+p_{s}^j(\te)\,[\nu^j-\te^j]+q_{s}^{jl}(\te)\,\der x^l_{st}\, [\nu^j-\te^j]
+o\lp|t-s|+|\nu-\te|\rp,
\end{multline}
where
\begin{align*}
&a_s(\te)=F(u_s(\te),Du_s(\te)), \qquad b_{s}^l(\te) =\si^l(u_s(\te),Du_s(\te)),  \\
&p_{s}^j(\te)=\partial_{\te^j} u_s(\te),  \qquad
q_{s}^{jl}(\te) =\partial_{\te^j}[\si^l(u_s(\te),Du_s(\te))],\\
&c_{s}^{lk}(\te)=b_{s}^k\,\partial_u\si^l(u_s(\te),Du_s(\te))+ q_{s}^{jk}(\te)\,\partial_{p^j}\si^l(u_s(\te),Du_s(\te)).
\end{align*}
\end{prop}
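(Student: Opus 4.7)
The plan is to derive the Taylor expansion \eqref{tx:taylor} as a direct consequence of Corollary \ref{taylor:wcdx} once the relevant coefficients of the weakly controlled structure of $u$ have been identified in terms of $F$ and $\sigma$. Since $u$ is by hypothesis a strong solution, it is in particular a strongly controlled process in $\cs_\bx^\ka([0,T];V)$ with $V=\cac^2(\R^n)$, and Proposition \ref{prop:strg-proc-as-integrals}(ii) lets us rewrite the increments of $u$ as
\begin{equation*}
\der u_{st} = \ist u^{x;l}_r \, dx^l_r + \ist u^{t}_r \, dr,
\end{equation*}
where, by Definition \ref{def:clasol}, $u^{x;l}=\si^l(u,Du)$ and $u^t=F(u,Du)$. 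This places $u$ in exactly the framework of Proposition \ref{intg:wcdx}, with $\mu^l = \si^l(u,Du)$ and $\eta = F(u,Du)$.

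The second step is to identify the Gubinelli derivative $\mu^{x;lk}$ of $\mu^l=\si^l(u,Du)$ with the third-level object $u^{xx;lk}$ of the strongly controlled decomposition. Since $u \in \cq_\bx^\ka(V)$ with derivative part $\si^l(u,Du)$, Remark \ref{rmk:diff-controlled-proc} ensures that $Du$ is itself weakly controlled with derivative part $\partial_{\te^j}[\si^l(u,Du)]$ as given in \eqref{eq:expression-Dtheta-sigma-u}. Applying the composition rule of Proposition \ref{cp:weak-phi} to the $C^3$ function $\si^l$ evaluated at the $\R\times\R^n$-valued controlled path $(u,Du)$ yields
\begin{equation*}
\mu^{x;lk}= \partial_u \si^l(u,Du)\, \si^k(u,Du) + \partial_{p^j}\si^l(u,Du)\, \partial_{\te^j}[\si^k(u,Du)],
\end{equation*}
which is exactly the prescription \eqref{eq:dcp-uxx-strong-solt} for $u^{xx;lk}$ and matches the coefficient $c_s^{lk}$ in the statement (modulo the index bookkeeping between $\mathbf{x}^{\mathbf{2};kl}$ and $\mu^{x;lk}$ in \eqref{dcp:wcdx}).

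Once these identifications are in place, the expansion \eqref{tx:taylor} follows by simply invoking Corollary \ref{taylor:wcdx}: the term $\eta_s(\te)\,\der\Id_{st}$ gives $a_s(\te)\,\der\Id_{st}$, the term $\mu^l_s(\te)\,\der x^l_{st}$ gives $b_s^l(\te)\,\der x^l_{st}$, the term $\mu^{x;lk}_s(\te)\,\mathbf{x}^{\mathbf{2};kl}_{st}$ gives $c_s^{lk}(\te)\,\mathbf{x}^{\mathbf{2};kl}_{st}$, while the two spatial terms $Du_s(\te)[\nu-\te]$ and $D\mu^l_s(\te)\,\der x^l_{st}[\nu-\te]$ produce $p_s^j(\te)[\nu^j-\te^j]$ and $q_s^{jl}(\te)\,\der x^l_{st}[\nu^j-\te^j]$ respectively, since $D\si^l(u,Du)=\partial_{\te^j}[\si^l(u,Du)]=q_s^{jl}$. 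The remainder term is $o(|t-s|+|\nu-\te|)$, inherited verbatim from Corollary \ref{taylor:wcdx}.

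The only nontrivial point is the identification of $\mu^{x;lk}$ with $u^{xx;lk}$: it requires propagating the weakly controlled structure of $u$ to $Du$ via Remark \ref{rmk:diff-controlled-proc} and then correctly composing the two-level controlled paths $(u,Du)$ through the smooth coefficient $\si$. The $\cac^3$ regularity assumption on $F$ and $\si$ is used precisely to carry out this composition and the subsequent pointwise spatial differentiations appearing in $q_s^{jl}$ and $c_s^{lk}$; with that in hand, no further computation beyond the direct substitution above is needed.
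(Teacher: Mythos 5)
Your proposal is correct and follows essentially the same route as the paper: the paper's (very terse) proof also applies Proposition \ref{cp:weak-phi} to $z=(u,Du)$ with $G=\si$ and then invokes Corollary \ref{taylor:wcdx} with $\mu=\si(u,Du)$ and $\eta=F(u,Du)$. Your additional step of identifying $\mu^{x;lk}$ with $u^{xx;lk}$ via Remark \ref{rmk:diff-controlled-proc} is exactly the detail the paper leaves implicit, and you have filled it in correctly.
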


\smallskip

\begin{proof}
We apply Proposition \ref{cp:weak-phi} to $z=(u,Du)$ and $G=\si$ and then Corollary~\ref{taylor:wcdx} to the weakly controlled process defined by $\si(u,Du)$ and  $\eta=F(u,Du)$.
\end{proof}

\subsection{Viscosity solutions}
As mentioned above, the structure of test functions for our viscosity solutions is based on the a priori structure of strong solution. Specifically, the set of test functions related to a rough coefficient $\si$ belongs to a subset of strongly controlled processes defined as follows:

\begin{defn}\label{def:test-fct}
Consider a coefficient $\si\in\cac^{3}(\R\times \R^n;\R^d)$.
Let $\vp$ be a strongly controlled process in $\cs_\bx^\ka([0,T];V)$ with $V=\cac^2(\R^n)$ and $\ka>1/3$.
We say that $\psi$ is an element of the test functions $\ct_{\si}$ if the coefficients in the decomposition \eqref{eq:strg-ctld-dcp} of $\psi$ satisfy $\psi^{x;l} = \si^l(\psi,D\psi)$ and
\begin{equation}\label{eq:dcp-test-fct}
\psi^{xx;l_1l_2} = \partial_u\si^{l_1}(\psi,D\psi)\,\si^{l_2}(\psi,D\psi)
+ \partial_{p^j}\si^{l_1}(\psi,D\psi)\,\partial_{\te^j} [ \si^{l_2}(\psi,D\psi)],
\end{equation}
where we have used the convention \eqref{eq:expression-Dtheta-sigma-u} for the definition of $\partial_{\te^j} [ \si^{l_1}(\psi,D\psi)]$.
\end{defn}

\begin{rem}\label{rmk:nonempty-T(sigma)}
Owing to Proposition \ref{prop:strg-proc-as-integrals}, a generic element of $\ct_{\si}$ satisfies the relation
\begin{equation*}
\der\psi_{st} = \int_{s}^{t} \si^l(\psi_u,D\psi_u) \, dx_{u}^{l} + \int_{s}^{t} \psi^{t}_{u} \, du,
\end{equation*}
for a given continuous element $\psi^{t}$. It is thus not obvious a priori that $\ct_{\si}$ is a nonempty set, not to mention the fact that it contains enough functions to fully characterize solutions to equation \eqref{eq:spde-integral}. However, this kind of property will be observed  on the particular examples of equations treated in the next sections.
\end{rem}

Similarly to what is done in \cite{LSo2}, we now define the concept of viscosity solution to our equation \eqref{eq:spde-integral} in the following manner:
\begin{defn}
\label{def:viscosity-critical-points}
A function $u\in \cac([0,T]\times \R^n)$ is called a viscosity subsolution (resp. supersolution) of \eqref{eq:spde-integral} if
\begin{itemize}
\item[(i)] We have $u(0,\te)\leq\al(\te)$ (resp. $u(0,\te)\geq\al(\te)$), for all $\te\in \R^n$.
\item[(ii)] For any $\vp\in \ct_{\si}$ it holds that if $u-\vp$ attains a local maximum (resp. minimum) at $(t_0,\te_0)\in [0,T]\times \R^n$ such that $u_{t_{0}}(\te_0)=\vp_{t_{0}}(\te_0)$, then
\begin{equation*}
    \vp^{t}_{t_0}(\te_0)\leq (\text{resp. } \geq)\;F\lp \vp_{t_{0}}(\te_0),D\vp_{t_0}(\te_0)\rp,
\end{equation*}
\end{itemize}
If $u$ is both a viscosity subsolution and supersolution, we say that $u$ is a viscosity solution of the rough PDE \eqref{eq:spde-integral}.
\end{defn}

\smallskip

Like in \cite{LS98,LSo1}, we favor here the definition of viscosity solutions through the introduction of the set of  test functions $\ct_{\si}$. The point of view of \cite{BBM,BFM,BM} is related instead on the notion of stochastic sub/superjet. This notion stems directly from the expansion of the strong solution to equation \eqref{eq:spde-integral} given by Proposition \ref{class-decom}, and can be stated as follows:

\begin{defn}
Let $z$ be a function from $[0,T]\times\R^n$ into $\R$ and let $(t_0,\te_0)\in [0,T]\times\R^n$. Then the $\si$-subjet (resp. $\si$-superjet) of $z$ at $(t_0,\te_0)$ is the set, denoted by $\mathfrak{P}_\si^{+}z(t_0,\te_0)$ (resp. $\mathfrak{P}_\si^{-}z(t_0,\te_0)$), of
$$
(a_0,p_0,X_0)\in \lp\R\times \R^n\times \R^{n,n}\rp
$$
such that
\begin{align}\label{eq:def-jet}
z_t(\te)\leq (\text{ resp. }\geq\,)&\, z_{t_0}(\te_0)+a_0\, (t-t_0)+ b^l_0\,\der x^l_{t_0t}+c^{lk}_0\,\mathbf{x}^{\mathbf{2},kl}_{t_0t}\notag\\
&+p_0^i\,(\te^i-\te_0^i)+q^{jl}_0\,\der x^l_{t_0t}\,(\te^j-\te_0^j)
+o\lp|t-t_0|+|\te-\te_0|\rp,
\end{align}
where for $1\leq l,k\leq d$ and $1\leq i,j \leq n$, we consider the real valued coefficients:
\begin{align*}
b^l_0&=\si^l(z_{t_0}(\te_0),p_0), \qquad
c^{lk}_0=b_0^k\,\partial_z\si^l(z_{t_0}(\te_0),p_0)
+q_0^{jk}\,\partial_{p^j}\si^l(z_{t_0}(\te_0),p_0),\\
q^{jl}_0&=p_0^j\,\partial_z\si^l(z_{t_0}(\te_0),p_0)
+X_0^{ij}\,\partial_{p^i}\si^l(z_{t_0}(\te_0),p_0).
\end{align*}
\end{defn}

Related to the last definition, one can also define viscosity solutions to rough PDEs by means of the jets:
\begin{defn}\label{def:viscosity-jets}
A function $u\in C([0,T]\times \R^n)$ is called a jet-viscosity subsolution (resp. supersolution) of \eqref{eq:spde-integral} if
\begin{itemize}
\item[(i)] We have $u(0,\te)\leq (\text{ resp. } \geq \;)\;\al(\te)$, for all $\te\in \R^n$;
\item[(ii)] For any $(t_0,\te_0)\in [0,T]\times \R^n$ and any $(a_0,p_0)\in   \mathfrak{P}_\si^{+}u(t_0,\te_0)$ (resp. $\mathfrak{P}_\si^{-}u(t_0,\te_0)$), it holds that
    \begin{equation*}
    a_0\leq (\text{ resp. } \geq\; )\;F\lp u(t_0,\te_0),p_0\rp.
\end{equation*}
\end{itemize}
If $u$ is both a jet-viscosity subsolution and supersolution, we say that $u$ is a jet-viscosity solution of \eqref{eq:spde-integral}.
\end{defn}

\begin{rem}
There is an easy way to relate Definition \ref{def:viscosity-jets} with Definition \ref{def:viscosity-critical-points}. Indeed, each element $(a_0,p_0)\in   \mathfrak{P}_\si^{+}u(t_0,\te_0)$ defines a function $\vp$ in a neighborhood of $(t_0,\te_0)$ by:
\begin{equation*}
\vp_{t}(\te)
= z_{t_0}(\te_0)+a_0\, (t-t_0)+ b^l_0\,\der x^l_{t_0t}+c^{lk}_0\,\mathbf{x}^{\mathbf{2},kl}_{t_0t} 
+p_0^i\,(\te^i-\te_0^i)+q^{jl}_0\,\der x^l_{t_0t}\,(\te^j-\te_0^j).
\end{equation*}
This function can be considered as an element of $\ct_{\si}$, at least locally around $(t_{0},\te_{0})$. It satisfies $\vp_{t_{0}}(\te_{0})=u_{t_{0}}(\te_{0})$, $\vp_{t_{0}}^{t}(\te_{0})=a_{0}$ and $D\vp_{t_{0}}(\te_{0})=p_{0}$. Now, since we start from an element of $\mathfrak{P}_\si^{+}u(t_0,\te_0)$, the function $u-\vp$ admits a local maximum at $(t_0,\te_0)$. If $u$ is assumed to be a subsolution, then according to Definition \ref{def:viscosity-critical-points} we must have $\vp_{t_{0}}^{t}(\te_{0})\le F( \vp_{t_{0}}(\te_0),D\vp_{t_0}(\te_0))$, which is compatible with the condition $a_{0}\le F( \vp_{t_{0}}(\te_0),D\vp_{t_0}(\te_0))$ of Definition \ref{def:viscosity-jets}.

 \smallskip

This heuristic argument is not as easy to formalize as in the deterministic setting, where a complete identification between jets $\mathfrak{P}_\si^{\pm}$ and functions in $\ct_{\si}$ is possible. However, we shall observe this relation on the particular cases of equations below.
\end{rem}

\section{Transport type equation}
\label{sec:transport}
We investigate here the first example of stochastic equation which can be solved thanks to viscosity techniques. This occurs when, going back to equation \eqref{eq:spde-integral}, we take $F(\te,u,Du)$ $=F(\te,Du)$ and $\si^{l}(\te,u,Du)=-\partial_{\te^i}u \, {A}_{l}^{i}(\te)$ for some smooth and bounded vector fields ${A}_{l}$. Specifically, the equation we shall consider here is of the form:
\begin{equation}
\label{eq:transport-intg}
u_t(\te)=\alpha(\te)+\int_0^t F\lp r,\te,Du_r(\te)\rp dr-\int_0^t\partial_{\te^i} u_r(\te)\,{A}_{l}^{i}(\te)\,  dx_r^l,
\end{equation}
with the same notational conventions as in Section \ref{sec:viscosity-solutions}. 
As in \cite{CFO}, equation \eqref{eq:transport-intg} is handled through a composition with the rough flow related to the vector fields ${A}^{l}(\te)\,  dx_r^l$. This is where we will make use of the (presumably) classical results of Section \ref{sec:prelim-stoch-flows}.

\smallskip

The definition of viscosity solution for equation \eqref{eq:transport-intg} is a particular case of Definition \ref{def:viscosity-critical-points}. However, it is important enough to label the explicit expression of the space of test functions we get in this case (which will be called $\ct_{{A}}$ in the remainder of the section) for further use:
\begin{defn}\label{def:test-fct-transport}
Consider a family of smooth and bounded vector fields ${A}_1,\ldots,{A}_d$ defined on $\R^n$.
Let $\psi$ be a strongly controlled process in $\cs_\bx^\ka([0,T];V)$ with $V=\cac^2(\R^n)$ and $\ka>1/3$.
We say that $\psi$ is an element of the test functions $\ct_{{A}}$ if the coefficients in the decomposition~\eqref{eq:strg-ctld-dcp} of $\psi$ satisfy
\begin{equation}\label{eq:dcp-test-transport}
\psi^{x;l} = -{A}_l \psi, \quad \text{and}\quad
\psi^{xx;l_1l_2} = {A}_{l_{1}} {A}_{l_{2}} \psi,
\end{equation}
where we recall that ${A}_{l_1} {A}_{l_2} \psi
={A}_{l_1}^{j_1}\,\partial_{\te^{j_1}}{A}^{j_2}_{l_2}\,\partial_{\te^{j_2}} \psi
+ {A}_{l_1}^{j_1} {A}_{l_2}^{j_2} \, \partial^2_{\te^{j_1}\te^{j_2}} \psi$.
\end{defn}

\begin{rem}
Obviously, this definition is just obtained by particularizing Definition \ref{def:test-fct} to the case $\si^{l}(\te,\psi,p)=-p^{i} \, {A}_l^i(\te)$. Indeed, we have $\partial_{\psi}\si^{l}(\te,\psi,p)=0$ and $\partial_{p_j}\si^{l}(\te,\psi,p)=-{A}_{l}^{j}(\te)$. Plugging this information into \eqref{eq:dcp-test-fct} we end up with
\begin{equation*}
\psi^{xx;l_1l_2}
= \partial_{p^j}\si^{l_1}(\psi,D\psi)\,\partial_{\te^j} [ \si^{l_2}(\psi,D\psi)]
= {A}_{l_{1}}^{j} \,\partial_{\te^j} [ D^{i}\psi \, {A}_{l_{2}}^{i}]
= {A}_{l_{1}} {A}_{l_{2}} \psi.
\end{equation*}
\end{rem}

In order to compare the solution to our equation with the solution of a deterministic equation with random coefficients, we shall also consider the set of test functions $\ct_{0}$ related to the deterministic problem $\partial_t u = F(u,Du)$, which is nothing else than the set of functions with space-time regularity $\cac^{1,2}$.
We now state a proposition which gives a one-to-one correspondence between the set of test functions $\ct_{{A}}$ corresponding to our stochastic problem and the set $\ct_{0}$:
\begin{prop}\label{prop:T(V)-T(0)}
Consider a family of smooth and bounded vector fields ${A}_1,\ldots,{A}_d$ defined on $\R^n$, and the set $\ct_{{A}}$ introduced in Definition \ref{def:test-fct-transport}. Then:

\smallskip

\noindent
\emph{(i)} Consider the solution $\phi$ to equation \eqref{eq:rde-flow}, considered as a flow. Then for any $\psi\in\ct_{{A}}$, the composition $\psi\circ\phi$ is an element of $\ct_{0}$.

\smallskip

\noindent
\emph{(ii)} Consider the solution $\phi^{-1}$ to equation \eqref{eq:rde-invflow}. Then for any $\psi\in\ct_{0}$, the composition $\psi\circ\phi^{-1}$ is an element of $\ct_{{A}}$.
\end{prop}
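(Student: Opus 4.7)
The plan is to read off both statements as direct consequences of the composition formula of Proposition \ref{prop:compo-scp}, fed with the decompositions of the flow $\phi$ and its inverse $\zeta = \phi^{-1}$ supplied by Propositions \ref{prop:dcp-flow}--\ref{prop:dcp-invflow}. No new integration is performed: one only needs to identify the $x$- and $xx$-coefficients of the composed process and recognize that these are either zero (for part (i)) or precisely of the form prescribed by \eqref{eq:dcp-test-transport} (for part (ii)).

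For part (i), I would apply Proposition \ref{prop:compo-scp} with $z = \psi \in \ct_{A}$ and $y = \phi$. Substituting $\psi^{x;l} = -A_{l}\psi$ and $\phi^{x;jl} = A_{l}^{j}\circ\phi$ into the first formula of that proposition gives
\begin{equation*}
[\psi\circ\phi]^{x;l_1}(\te)
= -A_{l_1}^{k}(\phi(\te))\,(\partial_{\xi^k}\psi)(\phi(\te))
+ (\partial_{\xi^j}\psi)(\phi(\te))\,A_{l_1}^{j}(\phi(\te)) = 0.
\end{equation*}
The $xx$-coefficient involves five terms. After plugging in $\psi^{xx;l_1l_2} = A_{l_1}A_{l_2}\psi$, $\phi^{xx;jl_1l_2} = (A_{l_2}A_{l_1}^{j})\circ\phi$ together with the expressions of $\psi^{x}, \phi^{x}$, a direct calculation shows that the four $\partial_{j_1 j_2}^{2}\psi$-contributions collapse thanks to symmetry of the Hessian, while the $\partial\psi$-contributions pair up and cancel via relabelling of summation indices. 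Since $\phi^{t}=0$, the drift of $\psi\circ\phi$ reduces to $\psi^{t}(\phi(\te))$, and the simultaneous vanishing of $[\psi\circ\phi]^{x}$ and $[\psi\circ\phi]^{xx}$ forces the $\flat$-remainder in \eqref{eq:strg-ctld-dcp} to vanish by Proposition \ref{prop:strg-proc-as-integrals}. Thus $\psi\circ\phi$ has only a Lebesgue-type drift and is $\cac^{2}$ in space (because $\phi_{t}$ is a $\cac^{3}$-diffeomorphism and $\psi\in\cac^{2}$), which places it in $\ct_{0}=\cac^{1,2}$.

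For part (ii), take $z = \psi \in \ct_{0}$ (so that $\psi^{x} = \psi^{xx} = 0$ and $\psi^{t}=\partial_{t}\psi$) and $y = \zeta = \phi^{-1}$. The composition formula collapses to
\begin{equation*}
[\psi\circ\zeta]^{x;l_1}(\te) = (\partial_{\xi^j}\psi)(\zeta(\te))\,\zeta^{x;jl_1}(\te)
= -(\partial_{\xi^j}\psi)(\zeta(\te))\,A_{l_1}^{k}(\te)\,\partial_{\te^k}\zeta^{j}(\te)
= -A_{l_1}(\psi\circ\zeta)(\te),
\end{equation*}
by the standard chain rule. For the $xx$-component only the term containing $\zeta^{xx}$ and the quadratic term $\zeta^{x}\zeta^{x}$ survive; plugging in $\zeta^{x;jl}=-A_{l}\zeta^{j}$ and $\zeta^{xx;jl_1l_2}=A_{l_1}A_{l_2}\zeta^{j}$ produces three summands that match, term by term, the chain-rule expansion of $A_{l_1}A_{l_2}(\psi\circ\zeta)$, so that
\begin{equation*}
[\psi\circ\zeta]^{xx;l_1l_2}=A_{l_1}A_{l_2}(\psi\circ\zeta).
\end{equation*}
The drift reads $(\partial_{t}\psi)(t,\zeta(\te))$, while the compatibility relation \eqref{eq:rel-zx-zxx} and the required H\"older regularity of all coefficients are produced automatically by Proposition~\ref{prop:compo-scp} applied to the strongly controlled processes $\psi$ (trivially such) and $\zeta$. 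This shows $\psi\circ\zeta\in\ct_{A}$.

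The main (and essentially only) obstacle is the careful bookkeeping of the five terms in the $xx$-component of part (i): one must use \emph{both} the defining identity $\psi^{xx;l_1l_2} = A_{l_1}A_{l_2}\psi$ and the symmetry $\partial^{2}_{ij}\psi = \partial^{2}_{ji}\psi$ to obtain the full cancellation. No additional regularity beyond what is already assumed on the $A_{l}$, on $\psi$, and on $\phi$ (via Proposition \ref{prop:dcp-flow}) is needed, and no new analytic ingredient beyond Propositions \ref{prop:compo-scp}--\ref{prop:dcp-invflow} enters the argument.
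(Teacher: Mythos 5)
Your argument is correct and follows essentially the same route as the paper: both proofs apply the composition formula of Proposition \ref{prop:compo-scp} with the decompositions of $\phi$ and $\zeta=\phi^{-1}$ from Propositions \ref{prop:dcp-flow}--\ref{prop:dcp-invflow}, and then check that the resulting $x$- and $xx$-coefficients either cancel (part (i)) or reproduce \eqref{eq:dcp-test-transport} (part (ii)). Your additional remark that the vanishing of $[\psi\circ\phi]^{x}$ and $[\psi\circ\phi]^{xx}$ kills $z^{\sharp}$ via \eqref{eq:rel-zx-zxx} and hence $z^{\flat}$ via Proposition \ref{prop:strg-proc-as-integrals} is a correct (and slightly more explicit) justification of why $\psi\circ\phi$ lands in $\ct_{0}$.
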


\begin{proof}
Consider $\psi\in\ct_{{A}}$, which means in particular that $\psi$ is a real valued strongly controlled process, and take the solution $\phi$ to equation \eqref{eq:rde-flow}. According to Proposition \ref{prop:compo-scp}, $\psi\circ\phi$ is still a strongly controlled process, with $[\psi\circ\phi]^{x;l_1}= \lc \psi^{x;l_1} +  {A}_{l_1}^j\,\partial_{\te^j}\psi \rc\circ\phi$ and
\begin{equation*}
[\psi\circ\phi]^{xx;il_1l_2}=
\lc
\psi^{xx;l_1l_2} + {A}_{l_2}^j\, \partial_{\te^j} \psi^{x;l_1} + {A}_{l_1}^j\, \partial_{\te^j}\psi^{x;l_2} + {A}_{l_1}^{j_1} {A}_{l_2}^{j_2}\, \partial^2_{\te^{j_1}\te^{j_2}}\psi + {A}_{l_2}^{k}\,\partial_{\te^k}{A}^j_{l_1}\,\partial_{\te^j}\psi
\rc\circ\phi.
\end{equation*}
Plugging now the expression \eqref{eq:dcp-test-transport} for the components of $\psi$ into this last expression, it is readily checked that both $[\psi\circ\phi]^{x;l_1}$ and $[\psi\circ\phi]^{xx;l_1l_2}$ vanish. This shows item (i).

\smallskip

As regards the proof of item (ii), we apply again Proposition \ref{prop:compo-scp}, plus the fact that $\psi^{x;l}=0$ and $\psi^{xx;l_1l_2}=0$.
\end{proof}

\smallskip

\begin{rem}
As a simple corollary of Proposition \ref{prop:T(V)-T(0)}, we can answer a question raised by Remark \ref{rmk:nonempty-T(sigma)}. Indeed, we can now assert that $\ct_{{A}}$ is as rich as $\ct_{0}$, which contains all $\cac^{1}$ functions in time.
\end{rem}

\smallskip

We can now relate our rough equation \eqref{eq:transport-intg} to a deterministic problem in the following way:
\begin{prop}\label{prop:transport-noisy-determ}
A function $u\in \cac([0,T]\times \R^n)$ is  a viscosity subsolution (resp. supersolution) of \eqref{eq:transport-intg} if and only if, setting $\hu=u\circ\phi^{{A}}$, the function $\hu$ is  a viscosity subsolution (resp. supersolution) of the following equation:
\begin{equation}\label{eq:deterministic-transport}
\hu_t(\te)=\alpha(\te)+\int_0^t \hf\lp r,\te,D\hu_r(\te)\rp dr,
\end{equation}
where we have used the notation
\begin{equation*}
\hf\lp r,\te, p\rp = F \lp r, \phi_r(\te), \lla p , \, [D\phi_r^{-1}]_{\mid\phi_r(\te)} \rra \rp,
\quad\text{with}\quad
\lla p , \, [D\phi_r^{-1}]_{\mid\phi_r(\te)} \rra \equiv
p^{j} \, \lc \partial_{\te^j} \phi_r^{-1}\rc \!\!(\phi_r(\te)).
\end{equation*}
\end{prop}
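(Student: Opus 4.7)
The approach rests entirely on the bijective correspondence between the test-function spaces $\ct_A$ and $\ct_0$ furnished by Proposition \ref{prop:T(V)-T(0)}. The plan is to show that, under the identification $\hat u = u\circ\phi$, a pairing $(u,\psi)$ with $\psi\in\ct_A$ producing a local extremum of $u-\psi$ is equivalent to a pairing $(\hat u,\hat\psi)$ with $\hat\psi\in\ct_0$ producing a local extremum of $\hat u-\hat\psi$, and that the two viscosity inequalities then match exactly.

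I would argue one direction (subsolution, say) in detail; the other is symmetric. Starting from a deterministic test function $\hat\psi\in\ct_0$, suppose that $\hat u-\hat\psi$ attains a local maximum at some $(t_0,\te_0)$ with $\hat u_{t_0}(\te_0)=\hat\psi_{t_0}(\te_0)$. Set $\psi:=\hat\psi\circ\phi^{-1}$, which belongs to $\ct_A$ by Proposition~\ref{prop:T(V)-T(0)}(ii), and let $\xi_0:=\phi_{t_0}(\te_0)$. The identity
$$
(u-\psi)_t(\xi) = (\hat u-\hat\psi)_t\!\lp\phi_t^{-1}(\xi)\rp,
$$
together with the joint continuity of the flow recalled in Section~\ref{sec:prelim-stoch-flows}, shows that $u-\psi$ has a local maximum at $(t_0,\xi_0)$, with $u_{t_0}(\xi_0)=\psi_{t_0}(\xi_0)$.

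Next I would identify the coefficients entering the viscosity inequality for $u$ at $(t_0,\xi_0)$ in terms of $\hat\psi$. Applying Proposition~\ref{prop:compo-scp} to $\psi=\hat\psi\circ\phi^{-1}$, and using that $\hat\psi^{x}=\hat\psi^{xx}=0$ (since $\hat\psi\in\ct_0$) together with $(\phi^{-1})^{t}=0$ (Proposition~\ref{prop:dcp-invflow}), one obtains
$$
\psi^{t}_{t_0}(\xi_0) = \hat\psi^{t}_{t_0}(\te_0),
$$
while the ordinary spatial chain rule yields
$$
D\psi_{t_0}(\xi_0) = \lla D\hat\psi_{t_0}(\te_0),\, [D\phi_{t_0}^{-1}]_{\mid\xi_0}\rra.
$$
Thus the subsolution inequality $\psi^{t}_{t_0}(\xi_0)\le F(t_0,\xi_0,D\psi_{t_0}(\xi_0))$ is, by the very definition of $\hat F$, equivalent to $\hat\psi^{t}_{t_0}(\te_0)\le \hat F(t_0,\te_0,D\hat\psi_{t_0}(\te_0))$, which is the deterministic subsolution condition for $\hat u$. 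The converse implication is obtained by the same chain of equivalences read in the opposite direction, starting from $\psi\in\ct_A$ and setting $\hat\psi:=\psi\circ\phi\in\ct_0$ via Proposition~\ref{prop:T(V)-T(0)}(i); the supersolution case is identical up to reversed inequalities, and the initial condition transfers for free because $\phi_0=\mathrm{Id}$.

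The main technical subtlety I anticipate lies in the rigorous preservation of the local extremum under the time-dependent change of variables $(t,\xi)\mapsto(t,\phi_t^{-1}(\xi))$: one needs this map to be a local homeomorphism near $(t_0,\xi_0)$, which follows from the joint H\"older/$\cac^{3}$ regularity of the rough flow summarized in Propositions~\ref{prop:dcp-flow} and~\ref{prop:dcp-invflow}, but is worth writing out carefully. Once this point is settled, the rest of the argument is a direct application of the composition machinery developed in Section~\ref{sec:algebraic-intg}.
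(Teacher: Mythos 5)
Your proposal is correct and follows essentially the same route as the paper: both arguments hinge on the bijective correspondence $\ct_{A}\leftrightarrow\ct_{0}$ of Proposition~\ref{prop:T(V)-T(0)}, transfer the local extremum via the diffeomorphism property of the flow, and match the two viscosity inequalities through the chain rule encoded in $\hf$. Your version is in fact slightly more explicit than the paper's in spelling out the coefficient identities $\psi^{t}_{t_0}(\xi_0)=\hpsi^{t}_{t_0}(\te_0)$ and $D\psi_{t_0}(\xi_0)=\lla D\hpsi_{t_0}(\te_0),[D\phi_{t_0}^{-1}]_{\mid\xi_0}\rra$, which the paper leaves implicit.
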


\begin{proof}
Assume first that the function $u\in \cac([0,T]\times \R^n)$ is  a viscosity subsolution of \eqref{eq:spde-integral}. This means that for any $\psi\in\ct_{{A}}$ condition (D) is met, with:
\begin{itemize}
\item[\textbf{(D)}] Whenever $u-\psi$ reaches a maximum at $(t_0,\te_0)$ with the additional assumption $u_{t_0}(\te_0)=\psi_{t_0}(\te_0)$, then $\psi_{t_0}^{t}(\te_0)\le F(t_0,\te_0,D\psi_{t_0}(\te_0))$.
\end{itemize}
Consider now $\phi=\phi^{{A}}$ defined by \eqref{eq:rde-flow} and $\hu=u\circ\phi$. We wish to show that $\hu$ is a viscosity solution of equation \eqref{eq:deterministic-transport}. This problem can be reduced to the following one: show that for any $\vp\in\ct_{0}$ such that $\hu-\vp$ reaches a maximum at $(t_0,\te_0)$ with the additional assumption $\hu_{t_0}(\te_0)=\vp_{t_0}(\te_0)$, then $\vp_{t_0}^{t}(\te_0)\le \hf(t_0,\te_0,D\vp_{t_0}(\te_0))$.

\smallskip

In order to prove this last claim, consider $\psi\in\ct_{{A}}$ and set $\hpsi=\psi\circ\phi^{{A}}$. According to Proposition \ref{prop:T(V)-T(0)}, we have $\hpsi\in\ct_{0}$. Moreover, condition (D) can be translated as: if $\hu\circ\phi^{-1}-\hpsi\circ\phi^{-1}$ reaches a maximum at $(t_0,\te_0)$ with the additional assumption $[\hu\circ\phi^{-1}]_{t_0}(\te_0)=[\hpsi\circ\phi^{-1}]_{t_0}(\te_0)$, then
\begin{equation*}
[\hpsi\circ\phi^{-1}]_{t_0}^{t}(\te_0)\le F(t_0,\te_0,D[\hpsi\circ\phi^{-1}]_{t_0}(\te_0))
= \hf(t_0,\te_0,[D\hpsi_{t_0}]\circ\phi^{-1}(\te_0)).
\end{equation*}
Owing to the fact that  $\phi_{t}$ is a diffeomorphism for any $t\in\R_{+}$, we can recast the latter condition as: if $\hu-\hpsi$ reaches a maximum at $(t_0,\te_0)$ with the additional assumption $\hu_{t_0}(\te_0)=\hpsi_{t_0}(\te_0)$, then $\hpsi_{t_0}^{t}(\te_0)\le \hf(t_0,\te_0,D\hpsi_{t_0}(\te_0))$, this assertion being true for any element $\hpsi$ of the form $\psi\circ\phi$ with $\psi\in\ct_{{A}}$. Thanks to Proposition \ref{prop:T(V)-T(0)}, the collection of these elements coincides with $\ct_{0}$, which proves that $\hu=u\circ\phi$ is a viscosity subsolution to equation \eqref{eq:deterministic-transport}.

\smallskip

The other relations are proven exactly along the same lines, and are left to the reader as an easy exercise.

\end{proof}

\begin{rem}
With the same kind of considerations, one can also prove that strong solutions to equation \eqref{eq:transport-intg} are also viscosity solutions.
\end{rem}

We can now turn to the main aim of this section, namely an existence and uniqueness result for the solution to equation \eqref{eq:transport-intg}:
\begin{thm}
Consider a family of smooth and bounded vector fields ${A}_1,\ldots,{A}_d$ defined on $\R^n$, and a bounded Lipschitz function $F:[0,T]\times\R^n\times\R^n$. Then equation \eqref{eq:transport-intg} admits a unique viscosity solution in the sense of Definition~\ref{def:viscosity-critical-points}.
\end{thm}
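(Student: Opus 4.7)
The plan is to reduce the question to the classical theory of viscosity solutions for first order Hamilton--Jacobi equations, using Proposition~\ref{prop:transport-noisy-determ} as the pivotal tool. Concretely, if $u$ is a viscosity solution of \eqref{eq:transport-intg} in the sense of Definition~\ref{def:viscosity-critical-points}, then $\hu:=u\circ\phi^{{A}}$ is a viscosity solution of the deterministic equation \eqref{eq:deterministic-transport}, and the correspondence $u\mapsto\hu$ is bijective since $\phi^{{A}}_{t}$ is a $\cac^{3}$-diffeomorphism for every $t$ by Proposition~\ref{prop:dcp-flow}. Hence existence and uniqueness for \eqref{eq:transport-intg} is equivalent to existence and uniqueness for \eqref{eq:deterministic-transport}.

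The first step is to check that the transformed Hamiltonian
\begin{equation*}
\hf(r,\te,p) = F\!\lp r,\phi_{r}(\te),\lla p,[D\phi_{r}^{-1}]_{\mid\phi_{r}(\te)} \rra\rp
\end{equation*}
satisfies the hypotheses required by the Crandall--Ishii--Lions theory for first order HJ equations with a continuous initial datum $\al$. Since $F$ is bounded and Lipschitz in $(\te,p)$, this reduces to showing that $\phi_{r}(\te)$, $D\phi_{r}^{-1}(\cdot)$ and their derivatives are continuous in $r$ and locally Lipschitz in $\te$, uniformly in $r\in\ott$. This follows from Proposition~\ref{prop:dcp-flow} (which gives $\phi\in\cs_{\bx}^{\ga}$ with bounded Jacobians, using that the vector fields ${A}_{l}$ are smooth and bounded), together with Proposition~\ref{prop:dcp-invflow} for the regularity of the inverse. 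Consequently $\hf$ is continuous in $r$, uniformly bounded, and Lipschitz in $(\te,p)$, with constants depending only on the rough path norm of $\bx$ and the $\cac^{3}_{b}$-norms of the ${A}_{l}$'s.

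The second step is to invoke the classical theory: for a continuous bounded initial condition $\al$ and a Hamiltonian $\hf$ which is continuous in time and Lipschitz in $(\te,p)$, equation \eqref{eq:deterministic-transport} admits a unique bounded uniformly continuous viscosity solution $\hu\in\cac(\ott\times\R^{n})$; existence can be obtained either via Perron's method or through the vanishing viscosity approximation, and uniqueness is a consequence of the standard comparison principle (see e.g.\ Crandall--Ishii--Lions). Setting $u=\hu\circ(\phi^{{A}})^{-1}$ yields a continuous function on $\ott\times\R^{n}$, and Proposition~\ref{prop:transport-noisy-determ} ensures that $u$ is the unique viscosity solution of the rough equation~\eqref{eq:transport-intg}.

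The main obstacle is the verification that $\hf$ is admissible for the deterministic viscosity theory. The subtle point is the joint continuity of $\hf$ in $(r,\te,p)$ (which requires the $\ga$-H\"older continuity of $r\mapsto\phi_{r}$ in the $\cac^{1}$-topology in space, given by the controlled structure of $\phi$ in Proposition~\ref{prop:dcp-flow} applied to $D\phi$) together with the preservation of Lipschitz regularity in $(\te,p)$ uniformly in $r$. Once these estimates are in place, the remainder of the argument is a direct transcription of the classical deterministic theory through the bijection of Proposition~\ref{prop:transport-noisy-determ}.
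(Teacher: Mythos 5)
Your proposal is correct and follows essentially the same route as the paper: reduce to the deterministic equation \eqref{eq:deterministic-transport} via Proposition~\ref{prop:transport-noisy-determ} and the diffeomorphism property of the flow, then invoke the classical comparison/existence theory for the transformed Hamiltonian $\hf$. The paper simply delegates the verification that $\hf$ inherits the boundedness and Lipschitz properties needed for comparison to \cite{CFO}, whereas you spell out that verification using the regularity of $\phi$ and $\phi^{-1}$; this is a welcome elaboration, not a different argument.
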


\begin{proof}
Thanks to Proposition \ref{prop:transport-noisy-determ}, the existence and uniqueness problem for equation~
\eqref{eq:transport-intg} is equivalent to the existence and uniqueness problem for equation~\eqref{eq:deterministic-transport}. In order to solve the latter problem one can invoke the classical viscosity theory, which amounts to show \emph{comparison} properties for $\hf$ as shown in \cite{CFO}. This condition is ensured whenever $F$ is bounded and Lipschitz, which ends the proof.

\end{proof}

\begin{rem}
A wide range of examples for the function $F$ are investigated in \cite{CFO} (see also~\cite{DFG}), and would lead to the same results in our setting. We haven't delved deeper into this direction since our aim is to settle a clear formulation for the notion of viscosity solution to rough PDEs rather than getting new results in terms of existence and uniqueness.
\end{rem}

We close this section by an expression of the solution to \eqref{eq:transport-intg} starting from jets. We will first particularize the definition of jets to our linear context for sake of clarity.
\begin{defn}
Let $z$ be a function from $[0,T]\times\R^n$ into $\R$ and let $(t_0,\te_0)\in [0,T]\times\R^n$. Then the subjet of $z$ at $(t_0,\te_0)$ related to equation \eqref{eq:transport-intg} is the set, denoted by $\mathfrak{P}_{A}^{+}z(t_0,\te_0)$, of triples
$$
(a_0,p_0,X_0)\in \lp\R\times \R^n\times \R^{n,n}\rp
$$
such that \eqref{eq:def-jet} holds, where for $1\leq l,k\leq d$ and $1\leq i,j \leq n$, we consider the real valued coefficients:
\begin{align*}
&b^l_0 =-p_{0}^{i} \, A_{l}^{i}(\te_{0}), \qquad
c^{lk}_0=\lc X_0^{ij}\,  A_{k}^{i}(\te_{0})+p_0^i\,\partial_{\te^j}A_{k}^i(\te_0)\rc  A_{l}^{j}(\te_{0}), \\
&q^{jl}_0
=
-\lc X_0^{ij}\,  A_{l}^{i}(\te_{0})+p_0^i\,\partial_{\te^j}A_{l}^i(\te_0)\rc.
\end{align*}
The corresponding superjet is defined accordingly.
\end{defn}

Consider now the jets $\mathfrak{P}_{0}^{\pm}z(t_0,\te_0)$ related to the deterministic equation \eqref{eq:deterministic-transport}, corresponding to $A=0$. We are able to relate stochastic and deterministic jets in the following way:

\begin{prop}
Let $u^{A}$ be the unique viscosity solution to equation \eqref{eq:transport-intg} and let $u^{0}$ be its deterministic counterpart, solution to \eqref{eq:deterministic-transport}. Then or all couples $(t_0,\te_0)\in[0,T]\times\R^{n}$ there exists a bijective correspondence between $\mathfrak{P}_{A}^{\pm}u^{A}(t_0,\te_0)$  and $\mathfrak{P}_0^{\pm}u^{0}(t_0,\te_0)$.
\end{prop}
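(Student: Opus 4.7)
The plan is to reduce the correspondence between $\mathfrak{P}_A^{\pm}u^A(t_0,\te_0)$ and $\mathfrak{P}_0^{\pm}u^0(t_0,\te'_0)$, with $\te'_0:=\phi^{-1}_{t_0}(\te_0)$, to the bijection between the test-function spaces $\ct_A$ and $\ct_0$ furnished by Proposition~\ref{prop:T(V)-T(0)}, combined with a chain-rule computation along the flow $\phi$ of~\eqref{eq:rde-flow}. Indeed, just as in the deterministic theory, each subjet should be realized by some local test function whose Taylor coefficients it prescribes, and Proposition~\ref{prop:T(V)-T(0)} then transports this realization across $\phi$.

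\textbf{Forward map.} Given $(a_0,p_0,X_0)\in \mathfrak{P}_A^{+}u^A(t_0,\te_0)$, I would first produce a test function $\vp\in\ct_A$ defined on a neighborhood of $(t_0,\te_0)$ satisfying $\vp_{t_0}(\te_0)=u^A_{t_0}(\te_0)$, $\vp^{t}_{t_0}(\te_0)=a_0$, $D\vp_{t_0}(\te_0)=p_0$, $D^{2}\vp_{t_0}(\te_0)=X_0$, and such that $u^A-\vp$ attains a local maximum at $(t_0,\te_0)$. Such a $\vp$ is obtained as $\vp:=\hat\vp\circ\phi^{-1}$ for $\hat\vp(t,\te):=Q(\te)+a_0(t-t_0)$, with $Q$ a quadratic polynomial whose value, gradient and Hessian at $\te'_0$ are tuned via the spatial chain rule so as to match the prescribed data at $(t_0,\te_0)$; by Proposition~\ref{prop:T(V)-T(0)}(ii), $\vp\in\ct_A$, and Proposition~\ref{prop:compo-scp} together with Proposition~\ref{prop:dcp-invflow}(i) yields $\vp^{t}_{t_0}(\te_0)=a_0$. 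The jet inequality~\eqref{eq:def-jet}, combined with the $\ct_A$-decomposition~\eqref{eq:dcp-test-transport}, then forces $u^A-\vp\le o(|t-t_0|+|\te-\te_0|)$ locally. Setting $u^0=u^A\circ\phi$ and composing with the $\cac^{3}$-diffeomorphism $\phi_{t}$ (Proposition~\ref{prop:dcp-flow}(i)) translates this into $u^0-\hat\vp$ having a local max at $(t_0,\te'_0)$, so that $(\partial_{t}\hat\vp_{t_0}(\te'_0),\,D\hat\vp_{t_0}(\te'_0),\,D^{2}\hat\vp_{t_0}(\te'_0))\in\mathfrak{P}_0^{+}u^0(t_0,\te'_0)$. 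An explicit chain rule expresses this triple as a linear-algebraic function of $(a_0,p_0,X_0)$ and of the first two spatial derivatives of $\phi_{t_0}$ at $\te'_0$, providing the forward map.

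\textbf{Inverse map and main obstacle.} The reverse map follows the same scheme with $\phi$ and $\phi^{-1}$ exchanged, relying on $(\phi_t)^{-1}=\phi^{-1}_t$ (Proposition~\ref{prop:dcp-invflow}(ii)) to guarantee that the two constructions are mutually inverse at the level of the triples $(a,p,X)$, since the associated deterministic and rough chain-rule operations along $\phi$ and $\phi^{-1}$ are mutually inverse linear maps. The principal technical point lies in the realization step: one must verify that the coefficients $b^l_0,\,c^{lk}_0,\,q^{jl}_0$ appearing in \eqref{eq:def-jet}---which are prescribed \emph{algebraic} functions of $(p_0,X_0)$ and of the vector fields $A_l$---precisely match the rough components $\vp^{x;l}$ and $\vp^{xx;l_1 l_2}$ of the lifted function $\vp$ at $(t_0,\te_0)$. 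This is exactly what the $\ct_A$-relations \eqref{eq:dcp-test-transport} enforce, and it is the algebraic reason why, after composing with $\phi$, the $\der x^l$ and $\bx^{\2}$ contributions cancel, in full consistency with Proposition~\ref{prop:T(V)-T(0)}.
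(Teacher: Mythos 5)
Your route is genuinely different from the paper's. The paper regularizes the driver and the coefficients, works with the resulting \emph{smooth strong solutions} $u^{\ep,A}$ and $u^{\ep,0}$, applies the composition expansion of Corollary~\ref{cor:compo-scp} to $u^{\ep}\circ\phi^{\ep}$ and to $u^{\ep,0}\circ\zeta^{\ep}$ to read off explicitly how the triple $(a_0,p_0,X_0)$ transforms, and then passes to the limit $\ep\to0$ on the jets. You instead work pathwise at the rough level with an \emph{arbitrary} jet element of a merely continuous $u^{A}$, realize it by a function of the form $\hat\vp\circ\phi^{-1}$ with $\hat\vp\in\ct_{0}$, and transport the defining inequality \eqref{eq:def-jet} through the flow using Proposition~\ref{prop:T(V)-T(0)} and the chain rule. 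Your approach has the merit of avoiding the paper's (itself unjustified) limiting step on jets, and of making the affine, invertible nature of the map $(a_0,p_0,X_0)\mapsto(\hat a_0,\hat p_0,\hat X_0)$ transparent; the computation of the transformed coefficients is the same in both arguments.

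Two points in your write-up need repair. First, you assert that the jet inequality forces $u^{A}-\vp$ to \emph{attain a local maximum} at $(t_0,\te_0)$. It does not: \eqref{eq:def-jet} together with the space--time expansion of $\vp$ only gives the one-sided bound $u^{A}-\vp\le o\lp|t-t_0|+|\te-\te_0|\rp$, and the $o(\cdot)$ may well be positive arbitrarily close to $(t_0,\te_0)$. Producing a genuine touching test function from a jet element is exactly the step the paper flags, in the remark following Definition~\ref{def:viscosity-jets}, as not being as easy as in the deterministic setting. Fortunately you do not need the local maximum: since $\mathfrak{P}_{0}^{+}u^{0}$ is itself defined by a one-sided Taylor inequality, it suffices to propagate the inequality $u^{A}\le\vp+o(\cdot)$ through the diffeomorphism; your argument should be rephrased accordingly. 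Second, and more seriously for the direct substitution: when you set $\te=\phi_t(\nu)$, the flow is only $\ga$-H\"older in time, so $|\te-\te_0|\lesssim|t-t_0|^{\ga}+|\nu-\te_0'|$ and the remainder $o\lp|t-t_0|+|\te-\te_0|\rp$ in \eqref{eq:def-jet} only becomes $o\lp|t-t_0|^{\ga}+|\nu-\te_0'|\rp$, which is strictly weaker than the $o\lp|t-t_0|+|\nu-\te_0'|\rp$ required by the definition of $\mathfrak{P}_{0}^{\pm}u^{0}(t_0,\te_0')$ (a remainder of size $|t-t_0|^{3\ga/2}$ is admissible on one side but not on the other when $\ga<2/3$). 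You must either argue that, after the exact cancellation of the $\der x$ and $\bx^{\2}$ contributions, the surviving error is controlled by the stronger modulus, or retreat to the paper's regularization where the flow is Lipschitz in time and the issue disappears. As it stands this is the one step of your argument that would actually fail without further input.
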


\begin{proof}
Let us regularize the coefficients and the noise of equation \eqref{eq:transport-intg}, producing a strong solution $u^{\ep,A}$. The same can be done for the deterministic equation, and we get a strong solution $u^{\ep,0}$. Let us also consider the solution $\phi$ to equation \eqref{eq:rde-flow}.
Then applying Corollary \ref{cor:compo-scp} we obtain a Taylor-type expansion for the composition $u^{\ep}\circ \phi^{\ep}$, whose coefficients satisfy:
\begin{equation*}
[u^{\ep}\circ \phi^{\ep}]_{t_0}^t(\te)=u^{\ep,t}(\phi^{\ep}_{t_0}(\te_0)), \qquad
D[u^{\ep}\circ \phi^{\ep}]_{t_0}(\te_0):=\partial_{\te^i}u^{\ep}_{t_0}(\phi^{\ep}_{t_0}(\te_0))
D\phi^{\ep,i}_{t_0}(\te_0)
\end{equation*}
and
\begin{equation*}
[u^{\ep}\circ \phi^{\ep}]_{t_0}^{x;l_1}(\te_0)=[u^{\ep}\circ \phi^{\ep}]_{t_0}^{xx;l_1l_2}(\te_0)
=D[u^{\ep}\circ \phi^{\ep}]_{t_0}^{x;l_1}(\te_0)=0.
\end{equation*}
The composition with $\phi$ transforms thus the unique element $(a_0,p_0,X_0)$ of $\mathfrak{P}_{A}^{\pm}u^{\ep,A}(t_0,\te_0)$ into the unique element of $\mathfrak{P}_{0}^{\pm}u^{\ep,0}(\phi(t_0,\te_0))$.

\smallskip

Reciprocally, let us strart from the solution $u^{\ep,0}$ of the regularized deterministic equation. Recall that  $\zeta$ designates the solution to equation \eqref{eq:rde-invflow}, and let us call $\zeta^{\ep}$ its regularization. Then, invoking  again Corollary \ref{cor:compo-scp} we obtain a Taylor-type expansion for the composition $u^{\ep,0}\circ \zeta^{\ep}$ with the corresponding coefficients:
\begin{align*}
&[u^{\ep,0}\circ \zeta^{\ep}]_{t_0}^t(\te_0)=u^{\ep,0,t}(\zeta^{\ep}_{t_0}(\te_0)),
\qquad
[u^{\ep,0}\circ \zeta^{\ep}]_{t_0}^{x;l_1}(\te_0)=-A_{l_1}[u^{\ep,0}\circ \zeta^{\ep}]_{t_0}(\te)\\
&[u^{\ep,0}\circ \zeta^{\ep}]_{t_0}^{xx;l_1l_2}(\te_0)=A_{l_1}A_{l_2}[u^{\ep,0}\circ \zeta^{\ep}]_{t_0}(\te_0),\quad
D[u^{\ep,0}\circ \zeta^{\ep}]_{t_0}(\te_0):=\partial_{\te^i}u^{\ep,0}_{t_0}(\zeta^{\ep}_{t_0}(\te_0))D\zeta^{\ep,i}_{t_0}(\te_0),
\end{align*}
and
\begin{equation*}
D[u^{\ep,0}\circ \zeta^{\ep}]_{t_0}^{x;l_1}(\te_0)
=\partial^2_{\te^i\te^j}u^{\ep,0}_{t_0}(\zeta^{\ep}_{t_0}(\te_0))D\zeta^{\ep;j}_{t_0}(\te_0)\zeta_{t_0}^{\ep,x;il_1}(\te_0)+\partial_{\te^i}u^{\ep,0}_{t_0}(\zeta^{\ep}_{t_0}(\te_0))D\zeta_{t_0}^{\ep,x;il_1}(\te_0).
\end{equation*}
Therefore, expanding the terms $[u^{\ep,0}\circ \zeta^{\ep}]_{t_0}^{x;l_1}(\te_0)$ and $[u^{\ep,0}\circ \zeta^{\ep}]_{t_0}^{xx;l_1l_2}(\te_0)$ in terms of the derivatives with respect to $\te$ of $\zeta^{\ep}$ up to order 2, and in terms of the vector fields $A_l$ and their first derivatives, we deduce that if $(a_0,p_0,X_0)\in\mathfrak{P}_{0}^{\pm}u^{\ep,0}(t_0,\te_0)$  then $(a_0,p_0^i\,D\zeta^{\ep;i}_{t_0}(\te_0),\tilde{X}_0)\in\mathfrak{P}_{A}^{\pm}u^{\ep,A}(t_0,\te_0)$, where
\begin{equation*}
\tilde{X}_0^{j_1j_2}:=X_0^{j_1'j_2'}\partial_{\te^{j_2}}\zeta_{t_0}^{\ep;j_2'}(\te_0)\partial_{\te^{j_1}}\zeta_{t_0}^{\ep,j_1'}(\te_0)+
p_0^{j_1'}\partial^2_{\te^{j_1}\te^{j_2}}\zeta_{t_0}^{\ep,j_1'}(\te_0).
\end{equation*}
We have thus identified the jets $\mathfrak{P}_{0}^{\pm}u^{\ep,0}(t_0,\te_0)$ and $\mathfrak{P}_{A}^{\pm}u^{\ep,A}(t_0,\te_0)$ of the regularized equations. Taking limits on $\mathfrak{P}_{0}^{\pm}u^{\ep,0}(t_0,\te_0)$, this allows to identify the jets related to $u^{0}$ and $u^{A}$.

\end{proof}

\begin{rem}
Obviously, the fact that we resort to solutions of equations in order to establish the jet correspondence is less attracting than the result concerning test functions. We shall thus privilege the definition of viscosity solution through the sets $\ct_{\si}$ of Definition \ref{def:test-fct} in  the sequel.
\end{rem}

\section{Equation with semilinear stochastic dependence}
\label{sec:semilinear}

We investigate here a second example of stochastic equation which can be solved thanks to viscosity techniques. This occurs when, going back to equation \eqref{eq:spde-integral}, we take $F(\theta,u,Du)=F\lp Du \rp$ and $\sigma^l(\theta,u,Du)=H_l(u)$ for some smooth, bounded and nonlinear functions $H_l:\R\to\R$.  Specifically, the equation we shall consider here is of the form:
\begin{equation}
\label{eq:spde-integral-semilinear}
u_t(\te)=\alpha(\te)+\int_0^t F\lp Du_r(\te)\rp dr+\sum_{l=1}^{d}\int_0^t H_l (u_r(\te))\,  dx_r^l,
\end{equation}
with the same notational conventions as in Sections \ref{sec:viscosity-solutions} and \ref{sec:transport}. We shall also follow the same notational conventions and the strategy considered by Lions and Souganidis in \cite{LSo1}. In particular, we shall rely on the following equation, which is just equation \eqref{eq:rde-flow} where $A_l$ has been replaced by $H_l$:
\begin{equation}\label{eq:rde-flow-H}
\phi_{t}(v) = v + \sum_{l=1}^{m} \int_{0}^{t} H_{l}(\phi_{u}(v)) \, dx_{u}^{l} .
\end{equation}
This equation will be interpreted as a flow from $\R$ to $\R$, and its inverse $\zeta$ is obviously defined by equation \eqref{eq:rde-invflow} where $A_l$ has been replaced by $H_l$.

\smallskip

As we did in Section \ref{sec:transport}, we give the explicit expression of the space of test functions we get in this case (which will be called $\ct_{{H}}$ in the remainder of the section) for further use:

\begin{defn}\label{def:test-fct-semilinear}
Consider a family of smooth, bounded and nonlinear vector fields ${H}_1,\ldots,{H}_d$ defined on $\R$.
Let $\psi$ be a strongly controlled process in $\cs_\bx^\ka([0,T];V)$ with $V=\cac^2(\R^n)$ and $\ka>1/3$.
We say that $\psi$ is an element of the test functions $\ct_{H}$ if the coefficients in the decomposition \eqref{eq:strg-ctld-dcp} of $\psi$ satisfy
\begin{equation}\label{eq:dcp-test-semilinear}
\psi^{x;l} = H_l\circ \psi, \quad \text{and}\quad
\psi^{xx;l_1l_2} = \lc H_{l_2}H_{l_1}\rc \circ \psi.
\end{equation}
\end{defn}

\smallskip

In order to compare the solution to our equation with the solution of a deterministic equation with random coefficients, we shall also consider the set of test functions $\ct_{0}$ related to the deterministic problem $\partial_t v =\tilde{F}(t,v,Dv)$ (where $\tilde{F}$ is a suitable transformation of  $F$ to be specified later on), which is nothing else than the set of functions with space-time regularity $\cac^{1,2}$.
As in Proposition \ref{prop:T(V)-T(0)}, we establish now a one-to-one correspondence between the set of test functions $\ct_{H}$ corresponding to our stochastic problem and $\ct_{0}$:
\begin{prop}\label{prop:semilinear:T(V)-T(0)}
Consider a family of smooth, bounded and nonlinear vector fields $H_1,\ldots,$ $H_d$ defined on $\R$, and the set $\ct_{H}$ introduced in Definition \ref{def:test-fct-semilinear}. Then:

\smallskip

\noindent
\emph{(i)} Let $\phi$ be the solution to equation \eqref{eq:rde-flow-H}, considered as a flow. Then for any $\psi\in\ct_{0}$, the composition $\phi\circ\psi$ is an element of $\ct_{H}$.

\smallskip

\noindent
\emph{(ii)} Let $\zeta$ be the inverse of $\phi$, solution to equation \eqref{eq:rde-invflow} with $A_l=H_l$. Then for any $\psi\in\ct_{H}$, the composition $\zeta\circ\psi$ is an element of $\ct_{0}$.
\end{prop}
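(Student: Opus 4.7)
The plan is to reduce both assertions to a direct application of Proposition \ref{prop:compo-scp}, combined with the explicit forms of the controlled expansions of $\phi$ and $\zeta=\phi^{-1}$ provided by Propositions \ref{prop:dcp-flow} and \ref{prop:dcp-invflow} (specialized to the case where the vector fields $A_l$ are the scalar functions $H_l:\R\to\R$, so that the action $A_l g$ becomes simply $H_l(v)g'(v)$). The structure mirrors Proposition \ref{prop:T(V)-T(0)}, except that now the flow acts in the $u$-variable rather than in $\theta$, which is why the composition is taken on the \textit{outside} in item (i) (namely $\phi\circ\psi$) rather than on the inside as in the transport case.

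For item (i), I would set $z=\phi$, $y=\psi$ with $\psi\in\ct_0$, so that $\psi^{x;l}=0$, $\psi^{xx;l_1l_2}=0$. Plugging these zero coefficients into the composition formulas of Proposition \ref{prop:compo-scp} kills every term in $[z\circ y]^{x;l_1}$ and $[z\circ y]^{xx;l_1l_2}$ except $z^{x;l_1}(y)$ and $z^{xx;l_1l_2}(y)$ respectively. Invoking Proposition \ref{prop:dcp-flow}, these surviving terms are exactly $H_{l_1}(\phi\circ\psi)$ and $[H_{l_2}H_{l_1}]\circ(\phi\circ\psi)$, which is precisely the condition \eqref{eq:dcp-test-semilinear} characterizing $\ct_H$. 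The time component $[z\circ y]^{t}=\phi'(\psi)\,\psi^{t}$ is automatically continuous, so no further check is needed.

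For item (ii), I would set $z=\zeta$, $y=\psi$ with $\psi\in\ct_H$, i.e. $\psi^{x;l}=H_l(\psi)$ and $\psi^{xx;l_1l_2}=H_{l_1}'(\psi)H_{l_2}(\psi)$. Combining these with the explicit decomposition $\zeta^{x;l_1}(v)=-H_{l_1}(v)\zeta'(v)$ and $\zeta^{xx;l_1l_2}(v)=H_{l_1}(v)H_{l_2}'(v)\zeta'(v)+H_{l_1}(v)H_{l_2}(v)\zeta''(v)$ of Proposition \ref{prop:dcp-invflow}, I would verify that the five contributions in the formula for $[z\circ y]^{xx;l_1l_2}$ cancel pairwise: the terms proportional to $\zeta'(\psi)$ collapse as
\begin{equation*}
H_{l_1}H_{l_2}'+H_{l_1}'H_{l_2}-H_{l_2}'H_{l_1}-H_{l_1}'H_{l_2}=0,
\end{equation*}
and the terms proportional to $\zeta''(\psi)$ yield $(1-1-1+1)H_{l_1}H_{l_2}=0$. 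A shorter computation shows $[z\circ y]^{x;l_1}=-H_{l_1}(\psi)\zeta'(\psi)+\zeta'(\psi)H_{l_1}(\psi)=0$, which matches $\ct_0$.

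The only nontrivial step is this bookkeeping for $(\zeta\circ\psi)^{xx;l_1l_2}$ in item (ii); it is purely algebraic and amounts to re-expressing the fact that the two rough coefficients of $\zeta$ are constructed precisely so as to make $\zeta\circ\phi=\mathrm{Id}$ at the level of controlled expansions, which is exactly the content of Proposition \ref{prop:dcp-invflow}(ii). Once both compositions are strongly controlled processes (guaranteed by Proposition \ref{prop:compo-scp}) with the correct $x$- and $xx$-coefficients, membership in $\ct_H$ or $\ct_0$ follows by definition, concluding the proof.
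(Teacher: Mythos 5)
Your proposal is correct and follows essentially the same route as the paper: both items are obtained by feeding the explicit controlled decompositions of $\phi$ and $\zeta$ from Propositions \ref{prop:dcp-flow} and \ref{prop:dcp-invflow} (with $A_l=H_l$) into the composition formulas of Proposition \ref{prop:compo-scp}, and checking that the resulting $x$- and $xx$-coefficients match \eqref{eq:dcp-test-semilinear} in case (i) and vanish in case (ii). Your explicit cancellation bookkeeping for $[\zeta\circ\psi]^{xx;l_1l_2}$ is the computation the paper leaves implicit, and it is carried out correctly.
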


\begin{proof}
 Take $\psi\in\ct_{0}$, which means in particular that $\psi$ is a real valued strongly controlled process with $\psi^x=0$ and $\psi^{xx}=0$. Consider the flow $\phi$ defined by equation \eqref{eq:rde-flow-H}. According to Proposition \ref{prop:compo-scp}, $\phi\circ\psi$ is still a strongly controlled process, with $[\phi\circ\psi]^{x;l_1}(\te)= H_{l_1}([\phi\circ\psi](\te))$, $[\phi\circ\psi]^{xx;il_1l_2}=
[H_{l_2}H_{l_1}]([\phi\circ\psi](\te))$ and $[\phi\circ\psi]^{t}(\te)=\partial_{v}\phi(\psi(\te))\,\psi^{t}(\te)$. This shows item (i).

\smallskip

Concerning the proof of item (ii), take $\psi\in\ct_{H}$. We also recall that for the flow $\zeta$ related to to equation \eqref{eq:rde-invflow} with $A_l=H_l$, we have:
\begin{align*}
&\zeta^{x;l_1}(v)=-H_{l_1}(v)\,\partial_v \zeta(v),\quad \zeta^{t}(v)=0,\\
&\zeta^{xx;l_1l_2}(v)=H_{l_1}(v)\lc\partial_v H_{l_2}(v)\,\partial_v\zeta(v)+H_{l_2}(v)\,\partial_v^2\zeta(v)\rc,
\end{align*}
where here $v\in\R$. Then, applying again Proposition \ref{prop:compo-scp}, one easily gets that $\zeta\circ\psi$ is a strongly controlled process with
\begin{equation*}
[\zeta\circ\psi]^{x;l_1}(\te)=[\zeta\circ\phi]^{xx;l_1l_2}(\te)= 0,
\quad\mbox{and}\quad
[\zeta\circ\psi]^{t}(\te)=\partial_{v}\zeta(\psi(\te))\,\psi^{t}(\te).
\end{equation*}
This means that $\zeta\circ\psi\ct_{0}$ and finishes the proof of our one-to-one correspondence.
\end{proof}

\smallskip

In order to relate our rough equation \eqref{eq:spde-integral-semilinear} to a deterministic problem, let us label the following easy monotonicity lemma:

\begin{lem}\label{lem:increasing-phi}
Let $\phi$ be the flow related to equation~\eqref{eq:rde-flow-H}. Then $\phi:\R\to\R$ is a strictly  increasing function.
\end{lem}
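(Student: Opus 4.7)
The plan is to exploit the diffeomorphism property already stated in Proposition~\ref{prop:dcp-flow}(i), rather than constructing a direct rough-path argument from scratch. For each fixed $t\in[0,T]$, $\phi_t:\R\to\R$ is a $\cac^3$-diffeomorphism, hence in particular a continuous bijection of $\R$ onto itself. Any such map is strictly monotone: its Jacobian $J_t(v):=\partial_v\phi_t(v)$ is continuous in $v$ and nowhere vanishing (otherwise the inverse would fail to be differentiable), so $J_t(\cdot)$ has constant sign on $\R$.

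Next I would fix the sign by a continuity-in-$t$ argument. Differentiating equation \eqref{eq:rde-flow-H} in the parameter $v$ (justified exactly as in Remark~\ref{rmk:diff-controlled-proc}, since $\phi$ is a strongly controlled process taking values in a space of smooth functions), the Jacobian solves the linear rough equation
\begin{equation*}
J_t(v)=1+\sum_{l=1}^{d}\int_0^t H_l'(\phi_u(v))\,J_u(v)\,dx_u^l,
\end{equation*}
so that $(t,v)\mapsto J_t(v)$ is jointly continuous by the standard continuity of the Itô--Lyons map. Since $J_0(v)=1>0$ and $J_t(v)$ never vanishes (by the diffeomorphism property), the intermediate value theorem applied along each path $t\mapsto J_t(v)$ forces $J_t(v)>0$ for every $(t,v)\in[0,T]\times\R$.

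Combining the two observations, $v\mapsto\phi_t(v)$ has strictly positive derivative on $\R$ for every $t\in[0,T]$, so it is strictly increasing, as claimed.

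The only potentially delicate step is the well-posedness of the Jacobian equation in the rough-path sense, but this is already implicit in the regularity statement of Proposition~\ref{prop:dcp-flow} (one only needs joint continuity of $(t,v)\mapsto J_t(v)$, not quantitative estimates); everything else is a soft consequence of the diffeomorphism property together with connectedness of $[0,T]$.
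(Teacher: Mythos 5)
Your argument is correct, and it shares its starting point with the paper's proof --- both differentiate \eqref{eq:rde-flow-H} in $v$ to obtain the linear rough equation $J_t(v)=1+\sum_l\int_0^t H_l'(\phi_u(v))J_u(v)\,dx_u^l$ for the Jacobian --- but the two arguments diverge at the final step. The paper concludes by writing down the explicit exponential solution $J_t(v)=\exp\lp\sum_l\int_0^t H_l'(\phi_u(v))\,dx_u^l\rp$ of this scalar linear equation (valid here because the rough path is geometric, so the usual change-of-variable formula applies), from which strict positivity is immediate. You instead argue softly: non-vanishing of $J_t(v)$ from the diffeomorphism property of Proposition~\ref{prop:dcp-flow}(i), continuity of $t\mapsto J_t(v)$, $J_0(v)=1$, and the intermediate value theorem. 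Both routes are valid. The paper's is more self-contained at this point (it does not need to invoke the diffeomorphism property, only the Jacobian equation, and positivity is quantitative), while yours trades the verification of the exponential formula for a purely topological argument; note also that your appeal to joint continuity via the It\^o--Lyons map is more than you need, since the IVT is applied along the path $t\mapsto J_t(v)$ for each fixed $v$, where continuity is immediate from the H\"older regularity of the controlled path $J(v)$. Likewise your opening observation that $J_t(\cdot)$ has constant sign in $v$ is redundant once the IVT argument in $t$ is run at every $(t,v)$.
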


\begin{proof}

The process $\phi$, seen as a function from $[0,T]\times\R$ to $\R$, is differentiable in $v$ (we refer to \cite{FV-bk} for a precise account on this fact). If we set $J_{t}(v)=\partial_{v}\phi_{t}(v)$, $J$ is solution to the following equation:
\begin{equation*}
J_{t}(v) = 1 + \sum_{l=1}^{m} \int_{0}^{t} H_{l}^{\prime}(\phi_{u}(v)) \, J_{u}(v) \, dx_{u}^{l} .
\end{equation*}
This equation admits an explicit solution, namely:
\begin{equation*}
J_{t}(v) =
\exp\lp  \sum_{l=1}^{m} \int_{0}^{t} H_{l}^{\prime}(\phi_{u}(v))  \, dx_{u}^{l}\rp,
\end{equation*}
which is obviously strictly positive. Thus $\phi$ is strictly increasing as a function of $v$.
\end{proof}

\smallskip

The relation between our rough equation \eqref{eq:spde-integral-semilinear} and a deterministic problem now takes the following form:
\begin{prop}\label{prop:semilinear-noisy-determ}
Let $\phi$ the $\mathcal{C}^3$-diffeomorphism defined by \eqref{eq:rde-flow-H} with $A_l=H_l$. Set $u=\phi \circ \tu$ where $\tu$ satisfies the equation:
\begin{equation}\label{eq:deterministic-semilinear}
\tu_t(\te)=\alpha(\te)+\int_0^t \tf\lp r,\tu_r(\te),D\tu_r(\te)\rp dr,
\end{equation}
where we have used the notation
\begin{equation*}
\tf\lp r,v,p\rp:=\frac{1}{\partial_v \phi_r(v)}\,F\lp\partial_v \phi_r(v)p\rp.
\end{equation*}
Then a function $u\in \cac([0,T]\times \R^n)$ is  a viscosity subsolution (resp. supersolution) of \eqref{eq:spde-integral-semilinear} if and only if, $\tu$ is  a viscosity subsolution (resp. supersolution) of \eqref{eq:deterministic-semilinear}.

\end{prop}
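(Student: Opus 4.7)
The plan is to mimic the argument of \propref{prop:transport-noisy-determ}, substituting the bijection $\ct_{H} \leftrightarrow \ct_{0}$ of \propref{prop:semilinear:T(V)-T(0)} for the correspondence used in the transport case, and using the monotonicity of $\phi$ provided by \lemref{lem:increasing-phi} in place of the diffeomorphism property invoked there. I will describe only the subsolution direction; the supersolution case follows by reversing every inequality, and both implications (direct and converse) are then treated symmetrically.

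Suppose first that $u$ is a viscosity subsolution of \eqref{eq:spde-integral-semilinear}, and let $\tpsi\in\ct_{0}$ be such that $\tu-\tpsi$ attains a local maximum at $(t_0,\te_0)$ with $\tu_{t_0}(\te_0)=\tpsi_{t_0}(\te_0)$. Set $\psi:=\phi\circ\tpsi$, which belongs to $\ct_{H}$ by \propref{prop:semilinear:T(V)-T(0)}(i). Because $\phi_{t}$ is strictly increasing on $\R$ for every $t$, by \lemref{lem:increasing-phi}, the inequality $\tu\le\tpsi$ in a neighbourhood of $(t_0,\te_0)$ is equivalent to $u=\phi\circ\tu\le\phi\circ\tpsi=\psi$ in the same neighbourhood, and the contact condition $\tu_{t_0}(\te_0)=\tpsi_{t_0}(\te_0)$ is equivalent to $u_{t_0}(\te_0)=\psi_{t_0}(\te_0)$. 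Hence $u-\psi$ attains a local maximum at $(t_0,\te_0)$ with contact, and the viscosity inequality for $u$ delivers $\psi^{t}_{t_0}(\te_0)\le F\lp D\psi_{t_0}(\te_0)\rp$.

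The remaining step is to recast this inequality in terms of $\tpsi$. The composition formula of \propref{prop:compo-scp}, applied to $z=\phi$ (for which $\phi^{t}=0$ and $\phi^{x;l}=H_l\circ\phi$) and $y=\tpsi$ (for which $\tpsi^{x}=\tpsi^{xx}=0$), yields $\psi^{t}(\te)=\partial_v\phi(\tpsi(\te))\,\tpsi^{t}(\te)$, while the chain rule in the spatial variable provides $D\psi(\te)=\partial_v\phi(\tpsi(\te))\,D\tpsi(\te)$. Since $\partial_v\phi_{t_0}(\tpsi_{t_0}(\te_0))>0$ (as an exponential, by the explicit formula obtained in the proof of \lemref{lem:increasing-phi}), I may divide through by this factor to obtain
\begin{equation*}
\tpsi^{t}_{t_0}(\te_0)\le \frac{1}{\partial_v\phi_{t_0}(\tpsi_{t_0}(\te_0))}\,F\lp \partial_v\phi_{t_0}(\tpsi_{t_0}(\te_0))\,D\tpsi_{t_0}(\te_0)\rp =\tf\lp t_0,\tpsi_{t_0}(\te_0),D\tpsi_{t_0}(\te_0)\rp,
\end{equation*}
which, together with the contact identity $\tu_{t_0}(\te_0)=\tpsi_{t_0}(\te_0)$, is the desired viscosity inequality for $\tu$. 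The converse implication proceeds identically: given $\psi\in\ct_{H}$, set $\tpsi:=\zeta\circ\psi\in\ct_{0}$ via \propref{prop:semilinear:T(V)-T(0)}(ii), transfer the extremum through the strictly increasing inverse $\zeta=\phi^{-1}$, apply the viscosity inequality for $\tu$, and multiply by $\partial_v\phi$ to return to $\psi^{t}\le F(D\psi)$.

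The only delicate point I foresee is verifying that the controlled-process components of $\phi\circ\tpsi$ and $\zeta\circ\psi$ satisfy the algebraic constraints required by \eqref{eq:dcp-test-semilinear} and by the vanishing conditions defining $\ct_{0}$, respectively. This is exactly the content of \propref{prop:semilinear:T(V)-T(0)}, so the verification reduces to invoking that proposition, and the formula $\psi^{t}=\partial_v\phi(\tpsi)\,\tpsi^{t}$ on the drift is immediate from \propref{prop:compo-scp} since $\phi$ has vanishing drift in its own controlled decomposition.
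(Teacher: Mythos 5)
Your proposal is correct and follows essentially the same route as the paper's own proof: transfer the test function via the bijection of Proposition \ref{prop:semilinear:T(V)-T(0)}, preserve the local extremum and contact condition through the strictly increasing flow $\phi$ of Lemma \ref{lem:increasing-phi}, and convert the viscosity inequality by dividing through by $\partial_v\phi>0$. (Incidentally, your sign convention $\tu\le\tpsi$ near the maximum is the right one; the paper's proof contains a harmless typo writing $\geq$ at that step.)
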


\begin{proof}

Assume first that the function $u\in \cac([0,T]\times \R^n)$ is  a viscosity subsolution of \eqref{eq:spde-integral-semilinear}. We want to show that $\tu$ is a viscosity solution of equation \eqref{eq:deterministic-semilinear}. Thus, we need to show that for any $\vp\in\ct_{0}$ such that $\tu-\vp$ reaches a local maximum at $(t_0,\te_0)$ with the additional assumption $\tu_{t_0}(\te_0)=\vp_{t_0}(\te_0)$, then $\vp_{t_0}^{t}(\te_0)\le \tf(t_0,\te_0,D\vp_{t_0}(\te_0))$.

\smallskip

In order to prove this, fix $\vp\in\ct_{0}$ such that $\tu-\vp$ reaches a local maximum at $(t_0,\te_0)$ with the additional assumption $\tu_{t_0}(\te_0)=\vp_{t_0}(\te_0)$. Then, notice that we have $\tu_{t}(\te)\geq \vp_t(\te)$ in some neighborhood of $(t_0,\te_0)$. Owing to Lemma \ref{lem:increasing-phi}, we have $u_{t}(\te)\geq [\phi\circ\vp]_t(\te)$ in some neighborhood of $(t_0,\te_0)$. By Proposition \ref{prop:semilinear:T(V)-T(0)} we have that $\phi\circ\vp$ is an element of $\ct_{H}$. Therefore, we deduce that
$u-\phi\circ\vp$ reaches a local maximum at $(t_0,\te_0)$ with the additional assumption $u_{t_0}(\te_0)=[\phi\circ\vp]_{t_0}(\te_0)$. Since $u$ is a viscosity subsolution then
\begin{equation}
\label{sub:cond:sto:eq}
[\phi\circ\vp]_{t_0}^{t}(\te_0)\le F(D[\phi\circ\vp]_{t_0}(\te_0)).
\end{equation}
Notice that we can write $[\phi\circ\vp]_{t_0}^{t}(\te_0)=\partial_v\phi_{t_0}(\vp_{t_0}(\te_0))\,\vp^t_{t_0}(\te_0)$, with $\partial_v\phi_{t_0}(\vp_{t_0}(\te_0))>0$, and $F(D[\phi\circ\vp]_{t_0}(\te_0))=F(\partial_v\phi_{t_0}(\vp_{t_0}(\te_0))D\vp_{t_0}(\te_0))$. Plugging these expressions into \eqref{sub:cond:sto:eq} we obtain the equivalent inequality
\begin{equation*}
\vp^t_{t_0}(\te_0)\le \frac{1}{\partial_v\phi_{t_0}(\vp_{t_0}(\te_0))}F(\partial_v\phi_{t_0}(\vp_{t_0}(\te_0))D\vp_{t_0}(\te_0))=:\tf(t_0,\te_0,D\vp_{t_0}(\te_0)),
\end{equation*}
showing the desired property for $\tu$.

\smallskip

The other relations are proven exactly along the same lines, and are left to the reader for sake of conciseness.
\end{proof}

We can now turn to the main aim of this section, namely an existence and uniqueness result for the solution to equation \eqref{eq:spde-integral-semilinear}:
\begin{thm}
Let $\alpha\in {\rm BUC}(\R^n)$. Consider a family of smooth, bounded and nonlinear vector fields $H_1,\ldots,H_d$ defined on $\R^n$, and a bounded Lipschitz function $F$ defined on $\R^n$ verifying that:
\begin{itemize}
\item[\textbf{(M)}] There exists a positive constant $C$ such that, either
\begin{itemize}
\item[(a)] $p\cdot DF(p)-F\leq C$ or

\smallskip

\item[(b)] $p\cdot DF(p)-F\geq -C$,
\end{itemize}
\end{itemize}
for a.e. $p\in \R^n$. Then equation \eqref{eq:spde-integral-semilinear}  admits a unique viscosity solution in the sense of Definition~\ref{def:viscosity-critical-points}.
\end{thm}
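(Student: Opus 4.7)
The strategy is to reduce the rough problem to a deterministic viscosity problem with noisy coefficients, as indicated by Proposition \ref{prop:semilinear-noisy-determ}, and then to invoke the classical theory of first-order Hamilton--Jacobi equations on the transformed equation \eqref{eq:deterministic-semilinear}. First I would observe that the flow $\phi_{t}$ defined by~\eqref{eq:rde-flow-H} is a $\cac^{3}$-diffeomorphism with $\phi_{0}=\id$ by Proposition \ref{prop:dcp-flow}, so that any candidate solution $u$ of \eqref{eq:spde-integral-semilinear} can be written as $u = \phi\circ\tu$ with $\tu=\zeta\circ u$, and the initial condition $\tu(0,\te)=\alpha(\te)$ is preserved. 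By Proposition \ref{prop:semilinear-noisy-determ}, $u$ is a viscosity sub- (resp.\ super-) solution of \eqref{eq:spde-integral-semilinear} if and only if $\tu$ is one of \eqref{eq:deterministic-semilinear}. Hence existence and uniqueness for \eqref{eq:spde-integral-semilinear} follows from the corresponding statement for the deterministic equation driven by $\tf$.

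Next I would verify that $\tf$ fits into a class of Hamiltonians for which the classical deterministic theory (Crandall--Ishii--Lions) yields a unique BUC viscosity solution of \eqref{eq:deterministic-semilinear}. Since $H_{l}\in\cac^{3}$ is bounded, $\lambda(r,v):=\partial_{v}\phi_{r}(v)=\exp\bigl(\sum_{l}\int_{0}^{r}H_{l}'(\phi_{u}(v))\,dx_{u}^{l}\bigr)$ is strictly positive, continuous in $r$, and smooth in $v$ with all derivatives bounded on compacts. Consequently $\tf(r,v,p)=\lambda^{-1}(r,v)F(\lambda(r,v)p)$ is continuous in $(r,v,p)$ and Lipschitz in $p$ since $F$ is Lipschitz and $\lambda$ is locally bounded. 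The only nontrivial point is the monotonicity/Lipschitz behaviour of $\tf$ in $v$, which is precisely where hypothesis~(M) enters.

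The key computation is
\begin{equation*}
\partial_{v}\tf(r,v,p)
= \frac{\partial_{v}\lambda(r,v)}{\lambda(r,v)^{2}}\,
\bigl[\,\lambda(r,v)\,p\cdot DF(\lambda(r,v)p)-F(\lambda(r,v)p)\,\bigr].
\end{equation*}
Under (M)(a), the bracket is bounded above by $C$, and since $\partial_{v}\lambda/\lambda^{2}$ is locally bounded, one obtains a one-sided Lipschitz bound on $\tf$ in $v$; under (M)(b) one obtains the reverse one-sided bound. This is exactly the structural condition used in \cite{LSo1} (and in the deterministic theory of \cite{CFO}) to secure a comparison principle for Hamilton--Jacobi equations of the form $\partial_{t}\tu=\tf(t,\tu,D\tu)$. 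Combining comparison with an existence scheme (Perron's method applied to the continuous sub- and supersolutions obtained by taking $\tu^{\pm}(t,\te)=\alpha(\te)\pm Kt$ for $K$ large enough, using boundedness of $F$ and of $\lambda$) yields a unique BUC viscosity solution $\tu$ of \eqref{eq:deterministic-semilinear}. Setting $u=\phi\circ\tu$ gives the unique viscosity solution of \eqref{eq:spde-integral-semilinear}.

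The main obstacle is the verification of the quantitative estimate on $\partial_{v}\tf$ and its translation into the standard comparison hypothesis: one has to check that the bound provided by~(M) is uniform enough in $(r,v)$, uniformly on the rough path $\bx$, to apply the comparison theorem. All the rest (continuity of $\tf$ in $r$, Lipschitz character in $p$, existence through Perron) is routine once the diffeomorphism property of $\phi$, guaranteed by Lemma \ref{lem:increasing-phi} and Proposition \ref{prop:dcp-flow}, is at hand.
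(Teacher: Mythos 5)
Your proposal follows exactly the paper's route: reduce to the deterministic equation \eqref{eq:deterministic-semilinear} via Proposition \ref{prop:semilinear-noisy-determ} and then invoke the classical comparison/existence theory as in \cite{LSo1}, where hypothesis \textbf{(M)} enters. You in fact supply more detail than the paper (the explicit computation of $\partial_{v}\tf$ showing how \textbf{(M)} yields the one-sided structure needed for comparison), which the paper simply delegates to \cite{LSo1}; the only caveat is that the sign of $\partial_{v}\lambda$ must still be tracked to turn \textbf{(M)} into a genuine one-sided bound, a point you rightly flag as the remaining obstacle.
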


\begin{proof}
Thanks to Proposition \ref{prop:semilinear-noisy-determ}, the existence and uniqueness problem for equation~\eqref{eq:spde-integral-semilinear} is equivalent to the existence and uniqueness problem for equation~\eqref{eq:deterministic-semilinear}. In order to solve the latter problem one can invoke the classical viscosity theory using the results sketched in \cite{LSo1}. This is the reason why we need to assume $F$  bounded, Lipschitz and satisfying \textbf{(M)}.
\end{proof}

\end{document}